\documentclass[a4paper11pt,leqno]{article}
\usepackage{amsmath,ams symb}
\usepackage{amsfonts}
\usepackage{eucal}
\usepackage{amsthm}
\numberwithin{equation}{section}
\usepackage{comment}

%%%%%%%%%%%% Deliminator Macros  %%%%%%%%%%%%%%%%

\newcommand{\jap}[1]{\langle #1 \rangle}

%%%%%%%%%%%%%%  General Symbol Macros  %%%%%%%%%%%%

\def\a{\alpha}
\def\b{\beta}

\def\d{\delta}
\def\e{\varepsilon}
\def\f{\varphi}
\def\g{\psi}

\def\k{\kappa}

\def\m{\mu}

\def\r{\rho}
\def\s{\sigma}
\def\t{\tau}
\def\x{\xi}
\def\y{\eta}
\def\z{\zeta}
\def\th{\theta}
\def\coloneqq{\mathrel{\mathop:}=}%

\def\re{\mathbb{R}}

\def\pa{\partial}

\renewcommand{\Im}{\text{{\rm Im}\;}}

\newcommand{\supp}{\text{{\rm supp}\;}}

%%%%%%%%%%%%  Theorem Environments  %%%%%%%%%%%%%

\newtheorem{thm}{Theorem}[section]
\newtheorem{lem}[thm]{Lemma}
\newtheorem{prop}[thm]{Proposition}
\newtheorem{cor}[thm]{Corollary}

\theoremstyle{definition}

\newtheorem{ass}{Assumption}

\theoremstyle{remark}
\newtheorem{rem}[thm]{Remark}

%%%%%%%%%%%%%%%%   SPECIAL SYMBOLS    %%%%%%%%%%%%%%%%%%%%%%%%%%%%

%\numberwithin{equation}{section}

%%%%%%%%%%%%  Title  %%%%%%%%%%%%%%%%

\title{Strichartz estimates for non-degenerate Schr\"odinger equations}
\author{Kouichi Taira}%

\begin{document}
\maketitle

\begin{abstract}
We consider Schr\"odinger equation with a non-degenerate metric on the Euclidean space. We study local in time Strichartz estimates for the Schr\"odinger equation without loss of derivatives including the endpoint case. In contrast to the Riemannian metric case, we need the additional assumptions for the well-posedness of our Schr\"odinger equation and for proving Strichartz estimates without loss.
\end{abstract}

\section{Introduction}
We recall that a solution to the Schr\"odinger equation on the Euclidean space $\re^n$:
\begin{align}
i\pa_tu + \Delta u = 0,\quad u|_{t=0} = u_0 \in L^2(\re^n),
\label{eq:free}
\end{align}
satisfies following time in local Strichartz estimates:
\begin{align}
\|u\|_{L^p([-T,T], L^q(\re^n))}\leq C_{T}\|u_0\|_{L^2(\re^n)}, \quad \frac{2}{p}+\frac{n}{q}=\frac{n}{2},\quad (p,q,n)\neq (2,\infty,2).
\label{free St}
\end{align}
For recent thirty years, Strichartz estimates have been playing a important role in studying nonlinear Schr\"odinger equations and have been studied for the purpose of interesting in itself. By considering the Sobolev's embedding theorem, local in time Strichartz estimates suggests the solution to (\ref{eq:free}) gains the regularity compared to the initial value. These smoothing effects for Schr\"odinger equations are related to the dynamical properties of the associated geodesic (Hamiltonian) flow. Strichartz estimates first appear in \cite{SR}. Local in time Strichartz estimates on Euclidean spaces with non-trapping metrics are studied in \cite{BT1}, \cite{ST} and others.  On  compact manifolds, it is known that (\ref{free St}) holds if we replace $\|u_0\|_{L^2}$ by $\|u_0\|_{H^{\frac{1}{p}}}$ which is sharp for the standard spheres \cite{BGT}. On the other hand, on scattering manifolds with non-trapping geodesic flow (\ref{free St}) holds without loss (\cite{BT1}, \cite{M1}) and in recent years even time in global estimates has been proved (\cite{BT2}, \cite{MT}). We also know a little information for smoothing effects on the manifolds with the nonempty trapped set. For example, Bourgain \cite{B} proves $\e$-loss Strichartz estimates for $p=q=\frac{2(n+2)}{n}$ on the $1,2$ dimensional tori. Mizutani (\cite{M2}, \cite{M3}) proves the Strichartz estimates for the Schr\"odinger equations with the potentials which have growth at infinity and the loss depends on the growth rates of the potentials. A recent remarkable result is Burq-Guillarmou-Hassell's \cite{BGH}. They show Strichartz estimates without loss under the presence of the thin hyperbolic trapped set.

In recent years, non-elliptic Schr\"odinger equations have been attracted attention. Salort \cite{S} proves Strichartz estimates for Schr\"odinger equations associated with possibly degenerate metrics on $\re^n$ with $\frac{1}{p}+\e$ loss:
\begin{align*}
\|u\|_{L^p([-T,T], L^q(\re^n))}\leq C_{T}\|u_0\|_{H^{\frac{1}{p}+\e}(\re^n)}, \quad \frac{2}{p}+\frac{n+v}{q}=\frac{n+v}{2}, (p,q)\neq (2,\infty)
\end{align*}
where $v$ is a degenerate index of the metric. Mizutani and Tzvetkov \cite{MT} improve the result with $\frac{1}{p}$ loss  These results says the estimates are worse as $v$ increases and we can take $v=0$ for non-degenerate cases. In \cite{MT}, the same results are proved on compact manifolds. Wang \cite{W} shows that the loss of $\frac{1}{p}$ for $p=4$ is optimal on flat two torus by counting the number of the integer points surrounded by a hyperbola. Compared to Bougain's result \cite{B}, this shows the differences from Riemannian one's.  Another method for proving Strichartz estimates is used in \cite{M}, where they prove Strichartz estimates without loss under the compactly supported perturbation. Non-degenerate Schr\"odinger equations appears in older papers (\cite{D}, \cite{GS} ,\cite{KPRV1}, \cite{KPRV2}). They considere the well-posedness of linear and non-linear non-degenerate Schr\"odinger equations and the local smoothing effects. From another aspect, Chihara \cite{C} studies the well-posedness and the local smoothing effects of the dispersive equations which belong to the more general classes of the partial differential equations including the non-degenerate Schr\"odinger equations. 
 
We remark that for proving results in \cite{S} and \cite{MT}, we need the additional assumption, that is the energy estimates:
\begin{align*}
\|e^{itP}u_0\|_{H^s}\leq C_{T,s}\|u_0\|_{H^s}+C_{T,s}\int_0^t\|(i\pa_t+P)e^{irP}u_0\|_{H^s}dr,\quad 0\leq |t|\leq T
\end{align*}
hold for $s\geq 0$. The additional assumption comes from the fact that the Schr\"odinger operator $P$ and a elliptic operator do not commute in general. The energy estimates hold under non-trapping assumption on $\re^n$ for non-degenerate cases or under classical energy conservation assumption, which respectively correspond to Assumption \ref{assB} and Assumption \ref{assC}. The energy estimates imply $H^s$ well-posedness and a stability of a solution to (possibly non-elliptic) linear Schr\"odinger equaution. In this paper, we prove Strichartz estimates without loss for non-degenerate cases under the non-trapping conditions and the energy conservation conditions. More precisely, we prove Strichartz estimates both outside a compact set under the Assumption \ref{assC} and in any compact set under the Assumption \ref{assB} and \ref{assC}. This result is a generalization of the result in \cite{BT1}. 

Main difficulties in our results are the essential self-adjointness of $P$, the energy estimates (as explained above), estimates for the classical trajectories, and non-commutativity between the Paley-Littelewood cut off $\f(Q)$ and the propagator $e^{itP}$. To overcome the difficulty of the non-commutativity, we prove the commutator is very small (order $h$) by using Assumption \ref{assC} and Weyl calculus. By more precise calculating (compared to Riemannian cases \cite{BT1}), we can patch the frequency-localized estimates and we can conclude our results.

We only focus on the non-degenerate cases in this paper. We consider the Strichartz estimates for the solution to
\[
i\pa_tu + Pu = 0,\quad u|_{t=0} = u_0 \in L^2(\re^n)
\] 
where $P = -\pa_i g^{ij}(x)\pa_j$ denotes a divergence form operator with $g$ and $g(x)=(g_{ij}(x))$ is non-degenerate $n\times n$ matrix which has $k$ negative eigenvalues. Throughout this paper,  we use the Einsetin convention and omit the summation. A orbit $(z(t,x,\x), \z(t,x,\x))$ denotes the integral curve of $H_p$ with the initial data $(x,\x)\in T^*\re^n$ where $p=g^{ij}\x_i\x_j$ and $H_p$ denotes a Hamilton vector field with $p$. 

\begin{ass}[Long-range condition]
\label{assA}

There exists $0<\m<1$ such that for any $\a\in \mathbb{Z}_{\geq 0}^n$ there exists $C_{\a}>0$ such that,
\begin{eqnarray*}
\sum_{i,j}|\pa_x^{\a}(g^{ij}(x)-\d_{ij}^k)| \leq C_{\a}\jap{x}^{-\m - \a},\quad  \forall x\in\re^n.
\end{eqnarray*}
\end{ass}

\begin{ass}[Non-trapping condition]
\label{assB}

For any $(x,\x)\in T^*\re^n\setminus 0$, 
\[
|z(t,x,\x)| \to \infty,\quad \text{as}\quad |t|\to \infty
\]
\end{ass}

\begin{ass}[Positive energy conservation]
\label{assC}

There exists a real valued elliptic symbol $q\in S^2$ such that $\{p,q\}=0$.

\end{ass}

\begin{rem}
We can assume $q\geq C|\x|^2$ by adding a constant and $q^w(x,hD)\geq 0$ by the Fefferman-Phong inequality.

\end{rem}

Now, we state our main theorem.

\begin{thm}
Suppose Assumption \ref{assA} and Assumption \ref{assC}. Let $n\geq 2$ and $(p,q)$ satisfies the admissible conditon:
\begin{align}
p\geq 2,\quad q\geq 2,\quad \frac{2}{p}+\frac{n}{q}=\frac{n}{2},\quad (p,q,n)\neq (2,\infty,2).
\end{align}
Then, for $T>0$ there exist $R>0$ and $C>0$ such that 
\begin{align}
\|(1-\chi)e^{itP}u_0\|_{L^p([-T,T],L^q(\re^n))}\leq C\|u_0\|_{L^2(\re^n)}
\end{align}
 for $u_0\in L^2(\re^n)$ and $\chi \in C_c^{\infty}(\re^n)$ with $\chi = 1$ on $|x|\leq R$. In addition, if we suppose Assumption \ref{assB}, 
\begin{align}
\|e^{itP}u_0\|_{L^p([-T,T],L^q(\re^n))}\leq C\|u_0\|_{L^2(\re^n)}
\end{align}
holds for $u_0\in L^2(\re^n)$.

\end{thm}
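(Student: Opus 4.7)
The plan is to follow the semiclassical reduction strategy of Burq--Tataru \cite{BT1}: frequency-localize, prove a short-time dispersive estimate via a WKB parametrix, invoke Keel--Tao on intervals of length $\sim h$, and then patch these $\sim 1/h$ subintervals into the fixed interval $[-T,T]$. The key adaptation for the non-elliptic setting is that the usual Littlewood--Paley cutoff $\varphi(-h^2\Delta)$ must be replaced by $\Phi_h := \varphi(h^2 q^w(x,hD))$, where $q$ is the conserved elliptic symbol of Assumption \ref{assC}. Ellipticity of $q$ makes $\Phi_h$ a genuine frequency localizer to $|\x|\sim h^{-1}$, and $\{p,q\}=0$ is what will make this substitute commute with $P$ to leading order. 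Using $q^w$ one also obtains essential self-adjointness of $P$ (it is not semibounded, so this is nontrivial) and the $H^s$ energy estimate $\|e^{itP}u_0\|_{H^s}\leq C_{T,s}\|u_0\|_{H^s}$ for all $s\in\re$, again because $\{p,q^s\}=0$ makes the relevant commutator one order lower than otherwise expected.

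For the short-time dispersive estimate I would construct an Isozaki--Kitada type semiclassical WKB parametrix for $e^{itP}\Phi_h$ on $|t|\leq ch$. Since $p=g^{ij}\x_i\x_j$ is non-degenerate in $\x$, the Hamilton--Jacobi phase exists on a short time interval with non-degenerate Hessian, and stationary phase (which only needs non-degeneracy, not definiteness) gives $\|e^{itP}\Phi_h\|_{L^1\to L^\infty}\leq C|t|^{-n/2}$. The abstract Keel--Tao theorem converts this into $\|e^{itP}\Phi_h u_0\|_{L^p([-ch,ch],L^q)}\leq C\|u_0\|_{L^2}$ for all admissible $(p,q)$ including the endpoint, uniformly in $h$. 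The parametrix requires pointwise escape estimates for the $H_p$-flow: these are furnished by the long-range Assumption \ref{assA} in the exterior region $\{|x|>R\}$ via an outgoing/incoming decomposition, and by the non-trapping Assumption \ref{assB} in the interior.

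To pass from these $O(h)$-sized intervals to $[-T,T]$, I split $[-T,T]$ into $\sim T/h$ subintervals $I_k$, apply the short-time estimate with initial datum $e^{it_k P}u_0$ on each, and raise to the $p$-th power in time. The delicate point is that $e^{it_k P}$ does not commute with $\Phi_h$, so the frequency localization is spoiled. Here Assumption \ref{assC} is crucial: $\{p,q\}=0$ forces the principal symbol of $[P,\Phi_h]/h$ to vanish, and a careful Weyl-calculus expansion combined with the energy estimate upgrades this to $\|[e^{itP},\Phi_h]\|_{L^2\to L^2}=O(h)$ uniformly for $|t|\leq T$, rather than the naive $O(ht)$. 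This uniform $O(h)$ bound is exactly what is needed to avoid a logarithmic loss when summing over dyadic scales $h=2^{-j}$ with the square function associated to $q^w$. After summation the full Strichartz estimate follows; the cutoff $1-\chi$ in the first statement discards a compact region where trajectories of $H_p$ may be trapped, while adding Assumption \ref{assB} removes the cutoff in the second statement.

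The step I expect to be hardest is precisely this uniform-in-$t$ commutator bound $[e^{itP},\Phi_h]=O(h)$: the Riemannian argument of \cite{BT1} uses $[-\Delta,\varphi(-h^2\Delta)]=0$, which is unavailable here. Obtaining $O(h)$ rather than $O(ht)$ requires exploiting $\{p,q\}=0$ beyond the principal symbol in the Weyl calculus and combining this with the energy estimates on $[-T,T]$. A secondary difficulty is constructing the escape estimates for $H_p$ outside a large ball under Assumption \ref{assA} alone: because the metric $g$ is indefinite, $H_p$ is not a geodesic flow and one cannot simply invoke coercivity on a fixed energy surface, so the outgoing/incoming decomposition has to be set up by hand.
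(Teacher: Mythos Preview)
Your proposal correctly isolates several of the key adaptations: replacing the usual Littlewood--Paley cutoff by $\varphi(Q(h))$ with $Q(h)=q^w(x,hD)$, using $\{p,q\}=0$ to obtain essential self-adjointness, the $H^s$ energy estimates, and the uniform-in-$t$ commutator bound $\|[e^{itP},\varphi(Q)]\|_{L^2\to L^2}=O(h)$. These are exactly the tools the paper develops in Section~3, and the commutator bound is indeed what allows one to sum over dyadic scales $h=2^{-j}$ without a logarithmic loss.

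There is, however, a genuine gap in your subinterval patching. Covering $[-T,T]$ by $\sim T/h$ intervals of length $\sim h$ and applying the short-time WKB bound on each yields, after summing the $p$-th powers, only $\|\varphi(Q)e^{itP}u_0\|_{L^p([-T,T],L^q)}\lesssim h^{-1/p}\|u_0\|_{L^2}$, i.e.\ the $1/p$ derivative loss of \cite{BGT}, \cite{MT}. The commutator bound governs the \emph{dyadic} summation, not this \emph{temporal} summation; they are separate issues and you have not addressed the latter. In the interior (Section~5 of the paper) the missing ingredient is the \emph{local smoothing effect} $\|\chi e^{itP}u_0\|_{L^2_tH^{1/2}_x}\lesssim\|u_0\|_{L^2}$, which requires the non-trapping Assumption~\ref{assB} and provides precisely the factor of $h^{1/2}$ per frequency shell that kills the loss coming from $\sim 1/h$ subintervals; local smoothing appears nowhere in your sketch.

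For the exterior statement (Assumptions~\ref{assA} and~\ref{assC} only) your scheme cannot succeed as written, because there is no local smoothing available and short-time WKB plus patching will not close. The paper's route here (Section~4) is different: the Isozaki--Kitada construction is a \emph{long-time} parametrix, valid on the full range $0\le |t|\le T/h$ of semiclassical times (equivalently all $|t|\le T$), restricted to outgoing/incoming regions $\Gamma^\pm(R,J,\sigma)$. It delivers the dispersive bound $|K_\pm(t,x,y,h)|\le C(h|t|)^{-n/2}$ on all of $[-T,T]$ at once, so Keel--Tao applies a single time with no subinterval patching at all. Your phrase ``Isozaki--Kitada type WKB parametrix on $|t|\le ch$'' conflates this global construction with the short-time WKB of Section~5; they are distinct parametrices used in distinct spatial regions, and only the former survives without Assumption~\ref{assB}.
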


\begin{comment}
\begin{rem}
There are many non-trivial non-degenerate metrics on $\re^n$ which satisfies Assumption A-C. In fact, if $g_1,g_2$ are long-range perturbations of flat Riemmannian metrics $g_{10}$ and $g_{20}$ on $\re^k$ and $\re^{n-k}$ respectively which satisfies the conditions: $\|g_1-g_{10}\|_{C^1}$ and $\|g_2-g_{20}\|_{C^1}$ are very small. Then, a non-degenerate metric $g=g_1-g_2$ on $\re^n$ satisfies Assumption A-C. %In fact, set $\tilde{g}=g_1+g_2$ and $q(x,\x)=\tilde{g}^{-1}_x(\x,\x)$ then $q\in S^2$ is elliptic and $\{p,q\}=0$. To check Assumption A and C is easy. By virtue of , we can check $g$ satisfies Assumption $B$.
\end{rem}
\end{comment}

\noindent{\bf Acknowledgment} The author would like to thank his supervisor Shu Nakamura for encourages writing this paper and helpful discussions. He also would be grateful to Hans Christianson for comments about local smoothing effects. This work was supported by JSPS Research Fellowship for Young Scientists, KAKENHI Grant Number 17J04478 and the program FMSP at the Graduate School of Mathematics Sciences, the University of Tokyo.

%%%%%%%%%%%%%%%%%%%%%%%%%%%%%%%%%%%%%%%%%%%%%%%%%%%%%%%%%%%%%%%%%%%%%preliminary%%%%%%%%%%%%%%%%%%%%%%%%%%%%%%%%%%%%%%%%%%%%%%%%%%%%%%%%%%%%%%%%%%%%%%%%%%%%%%%%%%%%%%%%%%%%%%%%%%%%%%%%%%%%%%%%%%%%%%%%%%%%%%%%%%%%%%%%%%
\section{Preliminary}

We first recall  the pseudo-differential operators. For any symbol $a\in C^{\infty}(\re^{2n})$, we define the pseudo-differential operator:
\[
a(x,D) u(x) =(2\pi)^{-n} \int e^{i(x-y)\cdot\x} a(x,\x) u(y) \,dy\,d\x,
\]
and the Weyl pseudo-differential operator:
\[
a^w(x,D) u(x) =(2\pi)^{-n} \int e^{i(x-y)\cdot\x} a((x+y)/2,\x) u(y) \,dy\,d\x.
\]
We also define symbol classes $S_m$, $S^l$ and $S^{l,m}$ by
\begin{align*}
&S_l \coloneqq \{a\in C^{\infty}(\re^{2n})\,|\, \forall\, \a, \b\;\exists\, C_{\a\b} \;\text{s.t.}\; |\pa_x^{\a}\pa_{\x}^{\b}a(x,\x)| \leq C_{\a\b}\jap{x}^{l-|\a|}\},\\
&S^m \coloneqq \{ a\in C^{\infty}(\re^{2n}) \,|\, \forall\, \a, \b\;\exists\, C_{\a\b} \;\text{s.t.}\; |\pa_x^{\a}\pa_{\x}^{\b}a(x,\x)| \leq C_{\a\b}\jap{\x}^{m-|\a|}\}\\
&S^{l,m} \coloneqq \{a\in C^{\infty}(\re^{2n})\,|\, \forall\,\a, \b\;\exists\, C_{\a\b} \;\text{s.t.}\;|\pa_x^{\a}\pa_{\x}^{\b}a(x,\x)| \leq C_{\a\b}\jap{x}^{l-|\a|} \jap{\x}^{m-|\b|}\},
\end{align*}
and $\Psi_l,\Psi^m,\Psi^{l,m}$ denote sets of the quantizations of $S_l, S^m, S^{l,m}$. Also,  $\Psi_{l,h}$, $\Psi_h^m,\Psi_h^{l,m}$ denote sets of the $h$-quantizations respectively.

\begin{lem}[Weyl full symbol of $P$]
\begin{align*}
p^w(x,D)=P-\sum_{i,j}\frac{1}{4}\pa_i\pa_jg^{ij}(x)
\end{align*}
\end{lem}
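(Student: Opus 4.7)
The plan is to read off the left (standard) symbol of $P$ by Leibniz expansion, and then apply the standard-to-Weyl symbol conversion formula. Because $P$ is a second-order differential operator, its left symbol is a polynomial of degree two in $\xi$, so the conversion formula, which is in general an asymptotic series, terminates after three terms and can be evaluated exactly.

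First, writing $P$ in terms of $D_j=-i\pa_j$ gives $P = D_i g^{ij}(x) D_j$, and bringing $g^{ij}$ past $D_i$ using $[D_i, g^{ij}] = -i\pa_i g^{ij}$ yields
\[
P = g^{ij}(x) D_i D_j - i\sum_{i,j}(\pa_i g^{ij}(x)) D_j,
\]
so the left symbol of $P$ is $b(x,\x)=p(x,\x)-i\sum_{i,j}(\pa_i g^{ij}(x))\x_j$.

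Next, I would apply the formula $a(x,\x)=e^{\frac{i}{2}\sum_k \pa_{x_k}\pa_{\x_k}}b(x,\x)$ relating the Weyl symbol $a$ to the left symbol $b$ of the same operator. Since $b$ is polynomial of degree two in $\x$, only the terms of order $0$, $1$, and $2$ in $\pa_x\pa_\x$ survive. A short computation shows: the first-order term produces an imaginary linear-in-$\x$ piece $+i\sum(\pa_i g^{ij})\x_j$, which, after relabeling and invoking the symmetry $g^{ij}=g^{ji}$, cancels the $-i\sum(\pa_i g^{ij})\x_j$ in $b$, and it also produces the real piece $\tfrac{1}{2}\sum_{i,j}\pa_i\pa_j g^{ij}$; the second-order term contributes $-\tfrac{1}{4}\sum_{i,j}\pa_i\pa_j g^{ij}$. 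Summing gives $a(x,\x)=p(x,\x)+\tfrac{1}{4}\sum_{i,j}\pa_i\pa_j g^{ij}(x)$, from which the lemma follows by rearrangement.

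The lemma is essentially a routine symbol-calculus exercise, so there is no deep obstacle. The only care is in the bookkeeping of the imaginary factors in the conversion formula, and in using the symmetry of $g^{ij}$ at the right moment to see the linear-in-$\x$ cancellation; beyond that, the computation is mechanical.
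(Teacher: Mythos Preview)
Your proposal is correct and follows exactly the route the paper has in mind: the paper simply says ``The proof is an easy computation,'' and what you wrote is precisely that computation carried out in full. The key steps---reading off the left symbol $b(x,\x)=p(x,\x)-i\sum_{i,j}(\pa_i g^{ij})\x_j$, applying the terminating expansion $a=e^{\frac{i}{2}\pa_x\cdot\pa_\x}b$, using the symmetry of $g^{ij}$ to cancel the linear-in-$\x$ terms, and combining the $\tfrac12$ and $-\tfrac14$ contributions---are all right.
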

\begin{proof}
The proof is a easy computation.
\end{proof}

\begin{lem}
Let $a\in S^2$ be a real-valued elliptic symbol. Then, $a^w(x,hD)$ is essentially self-adjoint on $C_c^{\infty}(\re^n)$ and a domain of a self-adjoint extension of $a^w(x,hD)$ is $H^2(\re^n)$ for $0<h\leq 1$.
\end{lem}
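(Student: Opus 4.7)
The plan is to combine the symmetry of the Weyl quantization of a real symbol with the elliptic estimate supplied by the pseudodifferential parametrix, and to read off the domain from the resulting identification.

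The first step is symmetry. Because $a$ is real-valued, the general Weyl calculus identity $(a^w(x,hD))^* = \bar a^w(x,hD)$ on Schwartz functions reduces to $(a^w(x,hD))^* = a^w(x,hD)$ on $\mathcal{S}(\re^n)$, so that $A_0 \coloneqq a^w(x,hD)|_{C_c^\infty(\re^n)}$ is symmetric. The goal is then to show $\bar A_0 = A_0^*$ with common domain $H^2(\re^n)$.

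The second step is to invoke ellipticity to produce an a priori $H^2$ bound. Since $a \in S^2$ is elliptic, the standard construction in the symbol calculus yields a parametrix $b \in S^{-2}$ with
\[
b^w(x,hD)\,a^w(x,hD) = I + R
\]
for some $R$ of arbitrarily negative order. As $b^w(x,hD):L^2\to H^2$ and $R:L^2\to H^N$ for every $N$ are bounded, this immediately gives
\[
\|u\|_{H^2(\re^n)} \leq C_h\bigl(\|a^w(x,hD)u\|_{L^2(\re^n)} + \|u\|_{L^2(\re^n)}\bigr)
\]
and identifies $H^2(\re^n)$ as the graph space of $a^w(x,hD)$ viewed as an operator on $L^2(\re^n)$.

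The third step assembles the identification. If $u\in\dom(A_0^*)$, then $a^w(x,hD)u\in L^2(\re^n)$ holds in the distributional sense, and applying the parametrix gives $u = b^w(x,hD)(a^w(x,hD)u) - Ru \in H^2(\re^n)$, so $\dom(A_0^*)\subset H^2(\re^n)$. Conversely, $C_c^\infty(\re^n)$ is dense in $H^2(\re^n)$ and $a^w(x,hD):H^2\to L^2$ is bounded, so approximating any $u\in H^2$ by $C_c^\infty$ sequences in the graph norm shows $H^2(\re^n)\subset\dom(\bar A_0)$. Together with the automatic inclusion $\dom(\bar A_0)\subset\dom(A_0^*)$, this forces $\dom(\bar A_0) = \dom(A_0^*) = H^2(\re^n)$ and hence $\bar A_0 = A_0^*$, i.e., essential self-adjointness with unique self-adjoint extension domain $H^2(\re^n)$.

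The only nontrivial ingredient is the parametrix step: one must verify that in the paper's symbol classes the iterative construction produces a remainder smoothing enough to upgrade $L^2$ regularity of $u$ to $H^2$ regularity. For a classical elliptic symbol of order $2$ this is routine, but since the $h$-dependent calculi are the framework the paper relies on throughout, I would spell out just enough of the calculus here to get bounds uniform in $h\in(0,1]$.
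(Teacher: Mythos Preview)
Your argument is correct and is in fact a cleaner packaging than the paper's. Both proofs rest on the same elliptic regularity input (the parametrix $b^w a^w = I + R$ with smoothing $R$), but the paper applies it inside the von Neumann deficiency-index criterion: it takes $u\in L^2$ with $(a^w(x,hD)\pm i)u=0$ distributionally, upgrades to $u\in H^2$ by elliptic regularity, and then---rather than pairing directly---conjugates by $\jap{D}^{-1}$ to reduce to a bounded self-adjoint operator $B=\jap{D}^{-1}a^w(x,hD)\jap{D}^{-1}$ and reads off $u=0$ from $\Im(Bv\pm i\jap{D}^{-2}v,v)=0$. The domain identification is then argued separately. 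Your route bypasses the deficiency equation and the conjugation trick entirely: you show $\dom(A_0^*)\subset H^2$ and $H^2\subset\dom(\bar A_0)$ directly from the parametrix plus density, which delivers both essential self-adjointness and the domain characterization in one stroke. The paper's approach is slightly more hands-on with the specific structure of the equation $(a^w\pm i)u=0$; yours is the standard ``maximal equals minimal'' argument and generalizes more transparently. One small remark: the statement of the lemma does not assert uniformity of constants in $h$, so your closing comment about spelling out $h$-uniform bounds, while useful downstream, is not needed for the lemma itself.
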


\begin{proof}
It suffices for the essential self-adjointness to prove that $(a^w(x,hD)\pm i)u=0$ in a distribution sense implies $u=0$ for $u\in L^2(\re^n)$.  By the elliptic regularity, we know $u\in H^2(\re^n)$. Set $B=\jap{D}^{-1}a^w(x,hD)\jap{D}^{-1}\in \Psi^{0}$ and $v=\jap{D}u\in H^1(\re^n)$, then we have $Bv\pm i\jap{D}^{-2}v=0$. Notice that $B$ is the bounded self-adjoint operator on $L^2(\re^n)$. Then, $0=\Im (Bv\pm i \jap{D}^{-2}v,v)=\pm(\jap{D}^{-1}v,\jap{D}^{-1}v)=0$. This implies $u=\jap{D}^{-1}v=0$. To prove that the domain of a self-adjoint extension of $a^w(x,hD)$ is $H^2(\re^n)$, we recall this domain is $\{u\in L^2(\re^n)\,|\, a^w(x,hD)u\in L^2(\re^n)\}$ by a general argument in functional analysis. However, this set is exactly $H^2(\re^n)$.
\end{proof}

Thus, $\{q^w(x,hD)\}_{0<h\leq 1}$ extends to a family of self-adjoint operators with the common domains $H^2(\re^n)$. $q^w(x,hD)$ also denotes the self-adjoint extension.

\begin{cor}[Essential self-adjointness of $P$] 
Under Assumption A and Assumption C, $P$ is essentially self-adjoint on $C_c^{\infty}(\re^n)$ 
\end{cor}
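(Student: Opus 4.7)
The plan is to apply Nelson's commutator theorem, taking as comparison operator $N \coloneqq q^w(x,D) + C_0$, where $q \in S^2$ is the real elliptic symbol supplied by Assumption~\ref{assC} (quantized with $h=1$) and $C_0 > 0$ is chosen so that $N \geq I$. By the preceding lemma applied to $q$, $N$ is self-adjoint with domain $H^2(\re^n)$, essentially self-adjoint on $C_c^\infty(\re^n)$, and (using the ellipticity of $q$ together with sharp G\aa{}rding) satisfies
\[
\|u\|_{H^2} \leq C(\|Nu\| + \|u\|), \qquad \langle Nu, u\rangle \geq c\|u\|_{H^1}^2, \qquad u \in C_c^\infty(\re^n).
\]
Also, $P$ is symmetric on $C_c^\infty(\re^n)$ by integration by parts, since $g^{ij}$ is real and symmetric.

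The relative-bound hypothesis of Nelson's theorem, $\|Pu\| \leq C(\|Nu\| + \|u\|)$, follows immediately: $P \in \Psi^2$ gives $\|Pu\| \leq C\|u\|_{H^2}$, and the elliptic estimate for $N$ above closes the bound. The delicate hypothesis is the commutator estimate
\[
|\langle [P, N]u, u\rangle| \leq C(\|N^{1/2}u\|^2 + \|u\|^2).
\]
To establish it, I would use the preceding lemma on the Weyl full symbol of $P$ to write $P = p^w(x,D) + V$, where $V(x) = -\tfrac{1}{4}\pa_i\pa_j g^{ij}(x)$ is a bounded multiplication operator decaying like $\jap{x}^{-\m-2}$ by Assumption~\ref{assA}; hence $[V, N] \in \Psi^1$. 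The remaining piece $[p^w, q^w]$ is where Assumption~\ref{assC} is essential: the Weyl calculus gives
\[
\sigma\bigpare{[p^w, q^w]} = \tfrac{1}{i}\{p, q\} + r,
\]
in which the leading Poisson-bracket term vanishes by Assumption~\ref{assC}, while $r$ consists of the order-$3$-and-higher Moyal terms. Because $p = g^{ij}(x)\x_i\x_j$ is a polynomial of degree two in $\x$, every Moyal contribution involving $\pa_\x^\a p$ with $|\a| \geq 3$ vanishes, and a direct inspection of the finitely many remaining terms gives $r \in S^1$. Thus $[P, N] \in \Psi^1$ is a genuine first-order operator, and Cauchy--Schwarz together with the lower bound on $\langle Nu, u\rangle$ yields
\[
|\langle [P, N]u, u\rangle| \leq \|[P, N]u\|\,\|u\| \leq C\|u\|_{H^1}\|u\| \leq C'(\|N^{1/2}u\|^2 + \|u\|^2).
\]

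The conclusion then follows from Nelson's commutator theorem, with $C_c^\infty(\re^n)$ serving as the common core for $N$ and for the symmetric operator $P$. The step I expect to be the main obstacle is the symbolic verification that $[p^w, q^w] \in \Psi^1$: this relies essentially on both $\{p, q\} = 0$ from Assumption~\ref{assC} \emph{and} the polynomial form of $p$ in $\x$ in order to tame the higher-order Moyal remainder. Without these two inputs the commutator of two second-order symbols would generically be of order three, and Nelson's bound would fail. The remainder of the argument is standard functional-analytic machinery.
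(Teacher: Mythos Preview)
Your proof is correct and follows essentially the same route as the paper: apply Nelson's commutator theorem with comparison operator $q^w(x,D)+C_0$, use $P\in\Psi^2$ for the relative bound, and use $\{p,q\}=0$ to drop the order of $[P,q^w]$ enough for the commutator hypothesis. The paper is content to note $[P,q^w]\in\Psi^2$ (which already suffices since $(u,Au)\lesssim\|u\|_{H^1}^2\lesssim(u,q^w u)$ for $A\in\Psi^2$), whereas you push a bit further to $\Psi^1$ via the odd Moyal expansion; your remark that the polynomial dependence of $p$ on $\x$ is essential for this is over-cautious---the standard Weyl remainder estimates in $S^m$ already give $\Psi^1$ once the Poisson bracket vanishes---but this does no harm.
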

\begin{proof}
As mentioned above, we may assume $q^w(x,D)\geq 1$ by the Fefferman-Phong inequality.
By the Nelson's commutator theorem (\cite{RS}) and $P\in \Psi^2$, it suffices to show that there exists $C>0$ such that $(u,[P,q^w(x,D)]u)\leq C(u,q^w(x,D)u)$ for $u\in C_c^{\infty}(\re^n)$. Since $\{p,q\}=0$ we know $[P,q^w(x,D)] \in\Psi^2$. This completes the proof.
\end{proof}

Finally, we state the local smoothing effects. By using non-trapping assumption and constructing an escape function, we can show the following lemma. For the details, see \cite{C}.

\begin{lem}[Local smoothing effects \cite{C}]
Suppose Assumption \ref{assA} and Assumption \ref{assB}. For any $T>0$ and $\chi\in C_c^{\infty}(\re^n)$ there exists $C>0$ such that
\[
\|\chi e^{itP}u_0\|_{L^2([-T,T],H^{\frac{1}{2}}(\re^n))} \leq C\|u_0\|_{L^2(\re^n)}
\]
for $u_0\in L^2(\re^n)$.
\end{lem}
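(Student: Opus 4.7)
The approach is the standard positive-commutator (Morawetz--Doi) method. The first step is to reduce to a semiclassical frequency-localized statement: via a Littlewood--Paley partition $\mathrm{Id}=\f_0(P)+\sum_{j\geq 1}\f(h_j^2 P)$ with $h_j=2^{-j}$ and $\f\in C_c^\infty(\re\setminus\{0\})$, the desired $H^{1/2}$ estimate follows from the uniform semiclassical bound
\[
\|\chi\,e^{itP}\f(h^2P)u_0\|_{L^2([-T,T];L^2(\re^n))}^2 \leq Ch\|u_0\|_{L^2(\re^n)}^2,\qquad h\in(0,1],
\]
together with almost-orthogonality of the Littlewood--Paley projections. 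The low-frequency piece is controlled by $L^2$ energy conservation and compactness of $\supp\f_0$.

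Under Assumption \ref{assB}, I construct a semiclassical escape function $a\in S^{0,1}$, morally of the form
\[
a(x,\x) = \int_{-\infty}^{0}\chi(z(s,x,\x))^2\,\jap{\z(s,x,\x)}\,ds,
\]
suitably truncated to the energy shell $\{p(x,\x)\sim 1\}$ (after semiclassical rescaling in $\x$) and homogenized of degree one in $\x$ for large frequencies. Non-trapping guarantees that the backward Hamilton orbit of any $(x,\x)$ with $p(x,\x)\neq 0$ leaves the support of $\chi$ for $s$ sufficiently negative, so the integrand vanishes past a finite time, and Assumption \ref{assA} yields the required control of the symbol seminorms in $x$. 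By construction $\{p,a\}\geq c\chi^2\jap{\x}$ on the relevant part of phase space.

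The heart of the proof is then the positive-commutator identity, for $u(t)=e^{itP}u_0$,
\[
\frac{d}{dt}\bigpare{a^w u(t),u(t)} = \bigpare{i[P,a^w]u(t),u(t)},
\]
combined with the sharp G{\aa}rding inequality: since the principal symbol of $i[P,a^w]-c\,\chi\jap{D}\chi$ is nonnegative, one obtains
\[
\bigpare{i[P,a^w]u(t),u(t)} \geq c\bignorm{\chi\jap{D}^{1/2}u(t)}^2 - C\|u(t)\|_{L^2}^2,
\]
after absorbing the sharp-G{\aa}rding remainder into the frequency-localized main term in the semiclassical setting. Integrating from $-T$ to $T$, the boundary term is bounded by $\|a^w\|_{L^2\to L^2}\|u_0\|_{L^2}^2$ via $L^2$-conservation, and the error by $CT\|u_0\|_{L^2}^2$, yielding the claim.

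The main obstacle is the construction of $a$ in the indefinite setting. In the Riemannian case the positive energy shells $\{p=\text{const}>0\}$ are compact in the fibers, so non-trapping together with Doi's construction apply cleanly. Here $g$ has $k$ negative eigenvalues, so level sets of $p$ are non-compact hyperboloids and the characteristic variety is a full null cone. Controlling the backward flow uniformly along the null directions, verifying that the integral defining $a$ really gives a symbol in $S^{0,1}$, and ensuring that the commutator dominates $\chi^2\jap{D}$ on the \emph{entire} frequency-localized phase space are the delicate points; this is exactly where Assumptions \ref{assA} and \ref{assB} are used most essentially, and where I would rely on the construction in \cite{C}.
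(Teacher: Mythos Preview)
The paper does not actually prove this lemma: it only states that the result follows from the non-trapping assumption by constructing an escape function, and refers to \cite{C} for details. Your outline of the positive-commutator (Doi--Morawetz) method with an escape function is precisely the approach indicated, and you correctly identify that the indefinite signature makes the construction of $a$ on the non-compact energy shells the delicate point---which is exactly the content of \cite{C}.

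One genuine issue: your Littlewood--Paley decomposition $\mathrm{Id}=\f_0(P)+\sum_j\f(h_j^2P)$ is not appropriate here. Since $g$ has mixed signature, $P$ is not elliptic and its level sets $\{p=c\}$ are non-compact hyperboloids (and $\{p=0\}$ is a full cone), so $\f(h_j^2P)$ with $\f\in C_c^\infty(\re\setminus\{0\})$ does \emph{not} localize in $|\x|$ and does not yield the almost-orthogonality or the $h$-gain you need for the $H^{1/2}$ sum. Moreover, the lemma only assumes Assumptions~\ref{assA} and~\ref{assB}, so you cannot substitute the elliptic conserved quantity $Q$ from Assumption~\ref{assC} either. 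The fix is straightforward: use the standard elliptic Littlewood--Paley cutoff $\f(-h^2\Delta)$ (or $\f(h|D|)$), commute it through $e^{itP}$ at the cost of lower-order errors controlled by Assumption~\ref{assA}, and run the positive-commutator argument on the semiclassically rescaled symbol $p(x,h\x)$ restricted to $|\x|\sim 1$. With that correction your sketch matches the standard proof.
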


%%%%%%%%%%%%%%%%%%%%%%%%%%%%%%%%%%%%%%%%%%%%%%%%%%%%%%%%%%%%%%%%%%%%functional calculus%%%%%%%%%%%%%%%%%%%%%%%%%%%%%%%%%%%%%%%%%%%%%%%%%%%%%%%%%%%%%%%%%%%%%%%%%%%%%%%%%%%%%%%%%%%%%%%%%%%%%%%%%%%%%%%%%%%%%%%%%%%%%%%%%%%%%%%

\section{Functional calculus}
Set $Q(h)=q^w(x,hD)$.

%%%%%%%%%%%%%%%%%%
\begin{comment}
\begin{lem}
For $\a_1,\a_2,\b_1,\b_2\in \mathbb{Z}_{\geq 0}$,
\[
\int_{\re^{4n}}e^{-\frac{2i}{h}(y\cdot \x-x\cdot \y)}x^{\a_1}y^{\b_1}\x^{\a_2}\y^{\b_2}dxdyd\x d\y=\frac{h^{|\a|}}{i^{|\a|}2^{|\a|}}\a_1!\a_2!\d_{\a_1\b_2}\d_{\a_2\b_1}
\]
holds where $\a=(\a_1,\a_2)$.
\end{lem}

\begin{proof}
By the composition formula (see \cite{Z} Theorem 4.11), 
\begin{align*}
&a(\cdot, \cdot, z)\#b(\cdot,\cdot,z)(x,\x)\\
&= \frac{1}{(\pi h)^{2n}}\int_{\re^{4n}}e^{-\frac{2i}{h}(y'\cdot \x'-x'\cdot \y')} a(x+x', \x+\x', z)b(y+y', \y+\y', z)dx'dy'd\x'd\y'\\
&= \sum_{|\a|+|\b| \leq M}\d_{\a_1\b_2}\d_{\a_2\b_1}  \frac{h^{|\a|}}{i^{|\a|}2^{|\a|}}\frac{\a_1!\a_2!}{(\a+\b)!}\pa_x^{\a_1}\pa_{\x}^{\a_2}a(x,\x,z)\pa_{x}^{\b_1}\pa_{\x}^{\b_2}b(x,\x,z)\\
&+ \sum_{|\a|+|\b|=M+1} \frac{M+1}{(\a+\b)!}x'^{\a_1}y'^{\b_1}\x'^{\a_2}\y'^{\b_2} \int_{\re^{4n}} \int_0^1(1-t)^{M}e^{-\frac{2i}{h}(y'\cdot \x'-x'\cdot \y')} \\
&\pa_x^{\a_1}\pa_{\x}^{\a_2}a(x'+tx,\x'+t\x,z)\pa_{x}^{\b_1}\pa_{\x}^{\b_2}b(y'+ty,\y'+t\y,z)dx'dy'd\x'd\y'
\end{align*}

\end{proof}

\end{comment}
%%%%%%%%%%%%%%%%%%%

\begin{thm}
For every $\f\in C_c^{\infty}(\re)$, $\f(Q(h))$ is a Weyl $h$-pseudodifferential operator of order $0$ (with respect to $h$-variable). More precisely, $\f(Q(h))\in \Psi_h^{0,0}\cap \Psi^{0,-\infty}$. Moreover, $\f(Q(h))$ has a following asymptotic expansion:
\begin{align*}
\f(Q(h))\equiv \sum_{j=0}^{\infty} h^jq_j^w(x,hD)\quad \text{modulo}\quad h^{\infty}\Psi_h^{0,-\infty}
\end{align*}
where $q_j\in S^{0,-\infty}$ and\,  $\supp q_j \subset \supp \f \circ q$. Furthermore, we can take $q_0(x,\x) = \f(p(x,\x))$ and $q_1=0$.

\end{thm}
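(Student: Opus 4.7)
The plan is to apply the Helffer--Sj\"ostrand functional calculus. Choose an almost-analytic extension $\tilde{\f}\in C_c^{\infty}(\co)$ of $\f$ satisfying $|\bar{\pa}\tilde{\f}(z)|\leq C_N|\Im z|^N$ for every $N\geq 0$. Since $Q(h)$ is self-adjoint with domain $H^2(\re^n)$,
\[
\f(Q(h)) = -\frac{1}{\pi}\int_{\co}\bar{\pa}\tilde{\f}(z)(z-Q(h))^{-1}\,dL(z),
\]
so the task reduces to writing $(z-Q(h))^{-1}$ as a semiclassical Weyl pseudodifferential operator, uniformly in $z\in\co\setminus\re$, with seminorms controlled by negative powers of $|\Im z|$.

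The first step is to construct a parametrix $B_N(z,h)=b_N(z,h)^w$, where $b_N=\sum_{j=0}^N h^j b_j(z;x,\x)$, by matching coefficients in
\[
(z-q)\#_h b_N = 1 + O(h^{N+1}).
\]
The zeroth order equation gives $b_0=(z-q)^{-1}\in S^{-2}$, which is well-defined on $\{\Im z\neq 0\}$ by ellipticity of $q$. The $h^1$-coefficient of the Weyl product is a constant multiple of the Poisson bracket $\{z-q,(z-q)^{-1}\}$, and the key algebraic observation is that $\{q,f(q)\}\equiv 0$ for any smooth $f$. Consequently $b_1=0$, which is precisely the mechanism that yields $q_1=0$. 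Higher $b_j$ are determined recursively and lie in $S^{-2-j}$ (in the $\x$-variable) with seminorms of polynomial growth in $|\Im z|^{-1}$.

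Next I would verify that the corresponding error $(z-Q(h))B_N-I=h^{N+1}R_N(z,h)$ satisfies the same type of bounds, and combine it with the trivial estimate $\|(z-Q(h))^{-1}\|_{\bounded(L^2)}\leq|\Im z|^{-1}$ to obtain
\[
(z-Q(h))^{-1} = B_N(z,h) - h^{N+1}(z-Q(h))^{-1}R_N(z,h).
\]
Inserting this into the Helffer--Sj\"ostrand formula, the decay $|\bar{\pa}\tilde{\f}(z)|\leq C_M|\Im z|^M$ absorbs the $|\Im z|^{-1}$-blow-up (provided $M$ is chosen large enough), the remainder contributes an $O(h^{N+1})$ term in the appropriate operator topology, and the parametrix identifies $\f(Q(h))$ with a Weyl $h$-pseudodifferential operator with full symbol $\sum_{j\geq 0}h^j q_j(x,\x)$, where
\[
q_j(x,\x) = -\frac{1}{\pi}\int_{\co}\bar{\pa}\tilde{\f}(z)\,b_j(z;x,\x)\,dL(z).
\]
For $j=0$ the Cauchy--Pompeiu formula gives $q_0=\f\circ q$, and $q_1=0$ by what was shown above.

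The support statement follows from Stokes' theorem: if $q(x,\x)\notin\supp\f$, then $z\mapsto b_j(z;x,\x)$ is holomorphic on a complex neighborhood of $\supp\tilde{\f}$, so $q_j(x,\x)=0$, i.e.\ $\supp q_j\subset\{q\in\supp\f\}$. Ellipticity of $q\in S^2$ makes this set $\x$-compact, upgrading the estimates to $q_j\in S^{0,-\infty}$ and delivering the claimed membership $\f(Q(h))\in\Psi_h^{0,0}\cap\Psi^{0,-\infty}$. I expect the main technical obstacle to be the careful bookkeeping of the $|\Im z|^{-1}$-growth through the recursion, ensuring that only finitely many derivatives of $\tilde{\f}$ suffice at each level $N$ so that the almost-analytic extension can genuinely absorb them and leave a clean $O(h^{N+1})$ remainder.
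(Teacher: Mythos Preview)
Your proposal is correct and follows exactly the standard Helffer--Sj\"ostrand route that the paper invokes by citing \cite{DS}; the paper's own proof is just that reference together with the remark that $q_1=0$ ``is due to Weyl calculus,'' which is precisely your observation that the $h^1$-coefficient of the Weyl product is (a multiple of) $\{z-q,(z-q)^{-1}\}=0$. One small point: the paper's statement reads $q_0=\f\circ p$, but this is a typo---as your argument (and the later use of $\{q_0,p\}=0$ in the commutator lemma) shows, the principal symbol must be $q_0=\f\circ q$, since $Q(h)=q^w(x,hD)$.
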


\begin{proof}
See \cite{DS}. The fact that $q_1=0$ is due to Weyl calculus.

%%%%%%%%%%%%%%%%%%%%%
\begin{comment}
First, we construct the parametrix of $(Q-z)^{-1}$ with $z\notin \mathbb{C}\setminus \s(Q)$. Let us fix $N\geq 2$. Set $s_0(x,\x,z) = (q(x,\x)-z)^{-1}$, then we can write
\[
(q(x,\x)-z)\#s_0(x,\x,z) = 1 + \sum_{k=1}^{N}h^{k}r_{0k}(x,\x,z)+h^{N+1}r_{0N+1}(x,\x,z,h),
\]
By virtue of Weyl calculus, we notice $r_{01}=0$.

\[
(q(x,\x)-z)\#(\sum_{j=0}^mh^{j}s_{j}(x,\x,z)) = 1+ \sum_{j=0}^{m}\sum_{k=m+1}^{N}h^kr_{jk}(x,\x,z)+h^{N+1}r_{mN+1}(x,\x,z,h)
\]

Set $s_{m+1}(x,\x,z)=\frac{1}{q(x,\x)-z}\sum_{j=0}^mr_{jm+1}(x,\x,z)$, then we compute
\begin{eqnarray*}
(q(x,\x)-z)\#(\sum_{j=0}^{m+1}h^{j}s_{j}(x,\x,z)) &=& 1+ \sum_{j=0}^{m}\sum_{k=m+2}^{N}h^kr_{jk}(x,\x,z)+h^{N+1}r_{mN+1}(x,\x,z,h)\\
&+& h^{m+1}((q(x,\x)-z)\#s_{m+1}(x,\x,z)-(q(x,\x)-z)s_{m+1}(x,\x,z)).
\end{eqnarray*}
\end{comment}
%%%%%%%%%%%%%%%%%%%%%

\end{proof}

\begin{cor}
For any $N\geq 1$, $1\leq q \leq r \leq \infty$, there exists $C_{Nqr} >0$ such that for any $0<h\leq 1$,

\begin{align}\label{eq:H^sPL}
\|\f(Q(h)) - \sum_{j=0}^{N}h^jq_j^w(x,hD) \|_{L^q\to L^r} \leq C_{Nqr}h^{N+1+d(\frac{1}{r}-\frac{1}{q})},
\end{align}

and for any $s\geq 0$ there exist $C_s>0$ such that for $N> 2s-1$ and $0<h\leq1$

\begin{align}\label{eq:L^pPL}
\|\f(Q(h)) - \sum_{j=0}^N h^jq_j^w(x,hD) \|_{H^{-s}\to H^s} \leq C_s h^{N+1-2s}.
\end{align}
\end{cor}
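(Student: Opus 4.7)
The starting point is the previous theorem, which gives
\[
R_N(h) := \f(Q(h)) - \sum_{j=0}^N h^j q_j^w(x,hD) \;\in\; h^{N+1}\Psi_h^{0,-\infty},
\]
i.e.\ $R_N(h) = h^{N+1} r^w(x,hD)$ for some family $r = r(h) \in S^{0,-\infty}$ whose seminorms are bounded uniformly in $0<h\leq 1$. The two estimates then reduce to bounding $r^w(x,hD)$ in the relevant operator norms, times the scalar $h^{N+1}$.

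\textbf{The $L^q\to L^r$ bound.} I would first estimate the Schwartz kernel
\[
K(x,y) = (2\pi h)^{-n}\int_{\re^n} e^{i(x-y)\cdot\x/h}\,r\bigl((x+y)/2,\x\bigr)\,d\x.
\]
For $|x-y|\leq h$ I use $|K(x,y)|\leq C h^{-n}$ directly from $\int |r|\,d\x <\infty$. For $|x-y|\geq h$ I integrate by parts $M$ times with the operator $L=(h/i|x-y|^2)(x-y)\cdot\nabla_\x$, which is admissible since $r$ is rapidly decreasing in $\x$, and obtain $|K(x,y)|\leq C_M h^{-n}(h/|x-y|)^M$. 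Together,
\[
|K(x,y)|\leq C_M h^{-n}\bigl(1+|x-y|/h\bigr)^{-M}, \qquad \forall M.
\]
Thus $|(r^w(x,hD)f)(x)|\leq (k*|f|)(x)$ with $k(z)= C_M h^{-n}\jap{z/h}^{-M}$. Applying Young's inequality with $1/s = 1+1/r-1/q$ (admissible since $q\leq r$ and $q,r\geq 1$) and choosing $M$ large, a change of variables $w=z/h$ gives $\|k\|_{L^s}= C h^{-n+n/s}= C h^{n(1/r-1/q)}$. Multiplying by $h^{N+1}$ yields \eqref{eq:H^sPL}.

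\textbf{The $H^{-s}\to H^s$ bound.} By the definition of the Sobolev norms, the inequality is equivalent to
\[
\bignorm{\jap{D}^s R_N(h)\jap{D}^s}_{L^2\to L^2}\leq C h^{N+1-2s}.
\]
I would factor $\jap{D}^s = \bigl(\jap{D}^s\jap{hD}^{-s}\bigr)\jap{hD}^s$ on each side. The multiplier $\jap{D}^s\jap{hD}^{-s}$ has Fourier symbol $\jap{\x}^s/\jap{h\x}^s$; a direct computation shows that for $s\geq 0$ and $0<h\leq 1$,
\[
\frac{\jap{\x}^2}{\jap{h\x}^2} = \frac{1+|\x|^2}{1+h^2|\x|^2}\leq h^{-2},
\]
so this Fourier multiplier is $L^2$-bounded with norm at most $h^{-s}$; the same holds for $\jap{hD}^{-s}\jap{D}^s$. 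On the other hand, the sandwich $\jap{hD}^s\, r^w(x,hD)\,\jap{hD}^s$ is a Weyl $h$-quantization whose symbol is computed by the Moyal star product from $\jap{\x}^s\in S^s$ and $r\in S^{0,-\infty}$; the rapid decay of $r$ in $\x$ absorbs all $\x$-growth, so the composed symbol lies in $S^{0,-\infty}$ with seminorms uniform in $h$. Calderón–Vaillancourt then gives uniform $L^2$-boundedness. Combining the three factors yields $O(h^{-s})\cdot O(1)\cdot O(h^{-s}) = O(h^{-2s})$, and multiplying by the prefactor $h^{N+1}$ gives \eqref{eq:L^pPL}.

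\textbf{Main obstacle.} The $L^q\to L^r$ half is a routine oscillatory-integral kernel estimate followed by Young's inequality. The real care is in the Sobolev estimate, specifically the passage between the \emph{standard} quantization appearing in $\jap{D}^s$ and the \emph{semiclassical} quantization appearing in $r^w(x,hD)$: each replacement of $\jap{D}^s$ by $\jap{hD}^s$ costs exactly the factor $h^{-s}$ that shows up in the final exponent $N+1-2s$, and verifying that the middle sandwich stays in $S^{0,-\infty}$ uniformly in $h$ is what makes the argument work.
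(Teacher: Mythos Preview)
Your proposal is correct and follows precisely the approach the paper indicates: the paper does not spell out a proof of this corollary but, in the remark immediately following it, explains that the result is obtained by using that $\f(Q(h))$ is exactly a pseudodifferential operator (from the preceding theorem), so that the remainder lies in $h^{N+1}\Psi_h^{0,-\infty}$. Your kernel bound plus Young's inequality for the $L^q\to L^r$ estimate, and the factorisation $\jap{D}^s=\bigl(\jap{D}^s\jap{hD}^{-s}\bigr)\jap{hD}^s$ together with Calder\'on--Vaillancourt for the $H^{-s}\to H^s$ estimate, are the standard way to extract the stated powers of $h$ from that symbolic information; this is exactly what the paper has in mind.
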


\begin{rem}
In \cite{BT1} and  \cite{BGT}, this corollary is proved by a direct calculation. However, we use the fact that $\f(Q(h))$ is exactly a pseudodifferential operator and we can get the optimal estimates in remainder terms. Remark that this improvements are not needed in our paper and never new result.
\end{rem}

\begin{cor}\label{PLsum}
Let $T>0$ and $\f\in C_c^{\infty}(\re)$ be a Liittlewood-Paley partition of unity, that is $1=\sum_{j=-\infty}^{\infty}\f(\frac{r}{2^j})$. Then for any $p\in[0,\infty], q\in [2,\infty)$ there exists $C>0$ such that 
\begin{align*}
&\|u\|_{L^p([-T,T],L^q(\re^n))}\\
&\hspace{1cm} \leq C\|u\|_{L^{\infty}([-T,T],L^2(\re^n))}+C(\sum_{j=0}^{\infty}\|\f(Q(\frac{1}{2^j}))u\|^2_{L^p([-T,T],L^q(\re^n))})^{\frac{1}{2}}
\end{align*}
for any $u\in \mathcal{S}'(\re\times \re^n)$ if the right side is finite.
\end{cor}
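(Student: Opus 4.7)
The plan is to decompose $u$ dyadically in the spectrum of $Q(1)$ and reduce to a Littlewood--Paley square function estimate at each fixed time. Set $\tilde\f(r)\coloneqq\sum_{j\leq-1}\f(r/2^j)$; the partition of unity hypothesis gives
\[
u(t)=\tilde\f(Q(1))u(t)+\sum_{j\geq 0}\f(Q(2^{-j}))u(t).
\]
Because $Q(1)$ is nonnegative and elliptic, only the values of $\tilde\f$ on a bounded interval matter, so after composing with a $C_c^{\infty}$ cutoff we may regard $\tilde\f\in C_c^{\infty}(\re)$. The preceding functional calculus theorem then places $\tilde\f(Q(1))\in\Psi^{0,-\infty}$; this is a smoothing operator and in particular maps $L^2(\re^n)\to L^q(\re^n)$ boundedly, giving
\[
\|\tilde\f(Q(1))u\|_{L^p([-T,T],L^q)}\leq CT^{1/p}\|u\|_{L^{\infty}([-T,T],L^2)}.
\]

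The heart of the argument is the pointwise-in-$t$ Littlewood--Paley bound
\[
\Bignorm{\sum_{j\geq 0}\f(Q(2^{-j}))v}_{L^q(\re^n)}\leq C\Bignorm{\Bigpare{\sum_{j\geq 0}|\f(Q(2^{-j}))v|^2}^{1/2}}_{L^q(\re^n)},
\]
valid for $q\in[2,\infty)$. To prove this I would apply Khintchine's inequality to reduce to a uniform $L^q$-bound on the randomized sum $\sum_j\e_j\f(Q(2^{-j}))$ over all Rademacher sign choices $\e_j\in\{\pm 1\}$. That uniform bound is a Mihlin-type spectral multiplier theorem for the self-adjoint nonnegative elliptic operator $Q(1)$, and rests on the almost-orthogonality of the family $\{\f(Q(2^{-j}))\}_{j\geq 0}$: the supports of $\f(\cdot/2^j)$ and $\f(\cdot/2^k)$ are disjoint for $|j-k|\geq 2$, and by the $h$-parametrix in the preceding theorem the corresponding cross terms are rapidly smoothing.

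Granting the square function bound, I would next apply Minkowski's inequality in $L^{q/2}_x$, and then after taking the $L^p_t$-norm use Minkowski once more in $L^{p/2}_t$ (both valid for $p,q\geq 2$; $p=\infty$ follows directly by taking pointwise supremum) to pull the $\ell^2$-sum outside both integrals:
\[
\Biggnorm{\Bigpare{\sum_{j\geq 0}|\f(Q(2^{-j}))u|^2}^{1/2}}_{L^p_tL^q_x}\leq\Bigpare{\sum_{j\geq 0}\|\f(Q(2^{-j}))u\|_{L^p_tL^q_x}^2}^{1/2}.
\]
Combined with the low-frequency estimate, this yields the claim in the Strichartz admissible range $p\geq 2$. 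The main technical obstacle is the pseudodifferential Littlewood--Paley theorem above: unlike the Riemannian setting of \cite{BT1} where $Q(1)\sim-\Delta$ and the square function reduces to a standard Fourier multiplier bound, here the symbol $p$ of the equation is indefinite and only the auxiliary elliptic symbol $q$ is Mihlin-friendly, so the estimate must be extracted by combining the $h$-parametrix of the preceding theorem with classical Mihlin--H\"ormander multiplier techniques for elliptic self-adjoint pseudodifferential operators.
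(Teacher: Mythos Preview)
Your proposal is correct and follows essentially the same route as the reference the paper defers to: the paper's own proof is simply ``See \cite{BGT}'', and what you have written is precisely the argument of Burq--G\'erard--Tzvetkov (Corollary 2.3 there) -- low-frequency piece handled by the smoothing of $\tilde\f(Q(1))\in\Psi^{0,-\infty}$, a Khintchine/Rademacher reduction of the square-function estimate to a uniform $L^q$ bound for the randomized multiplier, and then Minkowski in $L^{q/2}_x$ and $L^{p/2}_t$ to pull the $\ell^2$-sum outside. Your observation that the square-function step requires a genuine spectral multiplier or pseudodifferential argument for the elliptic operator $Q(1)$ (rather than the straight Fourier-multiplier bound available when $Q=-\Delta_g$ on a compact manifold) is the only real adaptation needed beyond \cite{BGT}, and the $h$-parametrix in the preceding theorem is exactly the right tool for it.
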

\begin{proof}
See \cite{BGT}.
\end{proof}

\begin{lem}[Energy estimates]\label{energy estimates}
Let $T>0$ and $s\in \re$. Then there exists $C>0$ such that
\begin{align*}
\|e^{itP}\|_{H^s\to H^s} \leq Ce^{T}
\end{align*}
for $0\leq |t| \leq T$.
\end{lem}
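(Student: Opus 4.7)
The plan is to exploit Assumption \ref{assC} to build an equivalent $H^s$-norm whose time derivative along the propagator $e^{itP}$ is controlled by itself. Let $Q=q^w(x,D)$, which by the discussion after the essential self-adjointness corollary is a positive self-adjoint elliptic pseudo-differential operator of order $2$ on $L^2(\re^n)$ (take $h=1$ in the family $Q(h)$, and recall that we may assume $Q\geq 1$). By standard functional calculus for elliptic self-adjoint pseudo-differential operators, $Q^{s/2}\in \Psi^s$ with principal symbol $q^{s/2}$. Since $q$ is elliptic of order $2$ under Assumption \ref{assA}, the map $u\mapsto \|Q^{s/2}u\|_{L^2}$ defines a norm on $H^s(\re^n)$ equivalent to the standard one.

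For $u_0\in C_c^{\infty}(\re^n)$, which is a core for the essentially self-adjoint operator $P$, the function $v(t):=e^{itP}u_0$ lies in $\bigcap_{N\geq 0}\dom(P^N)\subset \bigcap_{N\geq 0}H^N(\re^n)$ by elliptic regularity for the Laplacian-like operator $P\in \Psi^2$. Using the self-adjointness of $P$ and $Q^s$, I would differentiate
\[
\frac{d}{dt}(Q^sv(t),v(t))=i((Q^sP-PQ^s)v,v)=i([Q^s,P]v,v).
\]

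The algebraic heart of the argument is that Assumption \ref{assC} gives $\{p,q\}=0$, so at the symbol level $\{p,q^s\}=sq^{s-1}\{p,q\}=0$. Consequently the principal symbol of $[Q^s,P]$, which a priori belongs to order $2s+1$, vanishes and $[Q^s,P]\in \Psi^{2s}$ is bounded as an operator $H^s\to H^{-s}$. This gives
\[
|([Q^s,P]v,v)|\leq C\|v\|_{H^s}^2\leq C'(Q^sv,v),
\]
and Gronwall's inequality yields $(Q^sv(t),v(t))\leq e^{C'|t|}(Q^su_0,u_0)$, hence $\|e^{itP}u_0\|_{H^s}\leq Ce^{C'T}\|u_0\|_{H^s}$ for $|t|\leq T$. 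Density of $C_c^{\infty}(\re^n)$ in $H^s$ for $s\geq 0$ extends the bound to arbitrary $u_0\in H^s$; the case $s<0$ then follows by duality, since $P^*=P$ gives $(e^{itP})^*=e^{-itP}$.

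The main obstacle I expect is justifying the functional-calculus manipulations rigorously: treating $Q^s$ for noninteger $s$ as a genuine element of $\Psi^{2s}$ requires either citing a classical result (Shubin, H\"ormander) or constructing a parametrix by a Helffer--Sj\"ostrand-type formula, and the promotion of $[Q^s,P]$ from order $2s+1$ to order $2s$ needs a careful full-symbol expansion using $\{p,q\}=0$. In addition, the differentiation in $t$ is most cleanly justified by inserting a regularizer $J_\varepsilon=(I+\varepsilon Q)^{-N}$ with $N$ large, deriving the estimate uniformly in $\varepsilon$, and then letting $\varepsilon\downarrow 0$. Note that Assumption \ref{assB} plays no role here; the whole point is that Assumption \ref{assC} substitutes for non-trapping in providing a commuting elliptic observable.
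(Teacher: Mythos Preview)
Your proposal is correct and follows essentially the same strategy as the paper: both use $\{p,q\}=0$ to show that the commutator of the elliptic weight with $P$ drops one order, then apply Gronwall and duality. The paper's only difference is cosmetic---it writes the regularizer explicitly as $(I+Q)^{s/2}(I+\varepsilon Q)^{-s/2-1}$ (which lands in $H^2=\dom(P)$ for each $\varepsilon>0$) rather than your generic $(I+\varepsilon Q)^{-N}$, and it phrases the order-drop for $A_\varepsilon$ rather than for $[Q^s,P]$ directly, but the mechanism is identical.
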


%%%%%%%%%%%%%%%%%%%%%
\begin{comment}
\begin{rem}
$s\in \re$ で証明するには $u\in C^1(\re, \mathbb{S})$ で
\begin{align*}
\|u(t)\|_{H^s} \lesssim \|u_0\|_{H^s} + \int_0^t \|(i\pa_t+P)u(r)\|_{H^s}dr
\end{align*}
を示してうまく近似.
\end{rem}
\end{comment}
%%%%%%%%%%%%%%%%%%%%%

\begin{proof}
We may assume $s\in \re_{\geq 0}$ by the duality argument.

Set $v(t) = (I+q^w(x,D))^{s/2}(I+\e q^w(x,D))^{-s/2-1}e^{itP}u_0$, $u_0\in H^s$. 
Then, $v(t)$ satisfies
\begin{align*}
i\pa_tv(t) + Pv(t) =& [(I+q^w(x,D))^{s/2}(I+\e q^w(x,D))^{-s/2-1},P]\\
&\times (I+\e q^w(x,D))^{s/2+1} (I+q^w(x,D))^{-s/2}v(t).
\end{align*}
By the symbol calculus, a principal symbol of 
\begin{align*}
A_{\e} =& [(I+q^w(x,D))^{s/2}(I+\e q^w(x,D))^{-s/2-1},P]\\
&\times (I+ \e q^w(x,D))^{s/2+1}(I+q^w(x,D))^{-s/2}
\end{align*}
is given by 
\begin{align*}
\frac{1}{i}\{(1+q(x,\x))^{s/2}(1+\e q(x,\x)^{-s/2-1}),p\}(1+\e q(x,\x))^{s/2+1}(1+q(x,\x))^{-s/2},
\end{align*}
and this symbol belongs to a symbol class $S^{0,-\infty}$ because $\{q,p\} \in S^{0,-\infty}$. Moreover, $A_{\e}$ is bounded in $\e$ in $\Psi^{0,-\infty}$. By the Calderon-Vaillancourt theorem, $\|A_{\e}\|_{L^2\to L^2}$ is bounded in $\e$. Notice $v_{\e}(t) \in H^2 \subset D(P)$, then we have
\begin{align*}
\frac{d}{dt}\|v(t)|\|_{L^2}^2 =& (\frac{1}{i}(A_{\e}-P)v_{\e}, v_{\e})_{L^2}(t) +(v_{\e}, \frac{1}{i}(A_{\e}-P)v_{\e})_{L^2}(t)\\
\leq & C \|v_{\e}\|_{L^2}^2
\end{align*}
where $C>0$ is independent of $\e>0$ and $t$. By the Gronwall inequality,
\begin{align*}
\|v_{\e}(t)\|_{L^2} \leq e^{C|t|}\|v_{\e}(0)\|_{L^2}
\end{align*}
holds. Then, taking $\e \to 0$ we conclude
\begin{align*}
\|(I+q^w(x,D))^{s/2}e^{itP}u_0\|_{L^2} \leq e^{C|t|}\|(I+q^w(x,D))^{s/2}u_0\|_{L^2}.
\end{align*}
%\begin{eqnarray*}
%[e^{itP}, (I+\e q^w(x,D))^{-1}] &=& i \int_0^t e^{isP}[P , (I+\e q^w(x,D))^{-1}] e^{i(t-s)P}ds\\
%&=&  i\e \int_0^t e^{isP} (I+\e q^w(x,D))^{-1}[P, q^w(x,D)](I+\e q^w(x,D))^{-1} e^{i(t-s)P}ds
%\end{eqnarray*}

\end{proof}

\begin{lem}
For any $T>0$ and $s,m \in \re$, there exists $C>0$ such that
\begin{align}
\label{eq:commPL}
\|[\f(Q),P]\|_{H^{s+m}\to H^{s}} \leq Ch^{m+1},\\
\label{eq:commPR}
\|[e^{itP},\f(Q)]\|_{H^{s+m}\to H^s} \leq Ch^{m+1}
\end{align}
for $0<h\leq 1$ and $0\leq |t| \leq T$.
\end{lem}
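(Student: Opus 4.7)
The plan is to exploit the Poisson-commutation $\{p,q\}=0$ from Assumption~\ref{assC} at the level of semi-classical symbols, obtaining (\ref{eq:commPL}), and then to deduce (\ref{eq:commPR}) via a Duhamel argument together with the energy estimate of Lemma~\ref{energy estimates}.

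For (\ref{eq:commPL}), write $\f(Q(h)) = a^w(x,hD)$ with $h$-Weyl symbol $a = q_0 + h q_1 + h^2 q_2 + \cdots$ furnished by the functional-calculus expansion of the preceding theorem, where $q_1 = 0$ and $q_0 = \f(q)$ (so that $\{q_0,p\} = \f'(q)\{q,p\} = 0$ by Assumption~\ref{assC}; the same vanishing holds under the alternative $q_0 = \f(p)$). Using the full Weyl symbol of $P$ recalled at the start of this section and the fact that $p$ is quadratic in $\x$, write $P = h^{-2}p^w(x,hD) + \tilde r$ with $\tilde r \in S_{-\m-2}$ a decaying function, and decompose
\begin{align*}
[\f(Q),P] = h^{-2} [a^w(x,hD),\, p^w(x,hD)] + [a^w(x,hD),\, \tilde r].
\end{align*}

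The $h$-Moyal expansion applied to the first bracket yields the symbol $\tfrac{h}{i}\{a,p\}$ modulo $h^3\cdot \Psi_h^{0,-\infty}$. Thanks to $\{q_0,p\}=0$ and $q_1=0$, one has $\{a,p\} = h^2\{q_2,p\} + O(h^4)$, so the first bracket already sits in $h^3\cdot\Psi_h^{0,-\infty}$ and, dividing by $h^2$, contributes to $h\cdot\Psi_h^{0,-\infty}$ in the commutator. The second bracket has leading Moyal symbol $\tfrac{h}{i}(\pa_\x q_0)(x,h\x)\,\pa_x\tilde r(x)$, also in $h\cdot\Psi_h^{0,-\infty}$ by the decay of $\pa_x\tilde r$. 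Hence $[\f(Q),P]\in h\cdot \Psi_h^{0,-\infty}$.

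The claimed Sobolev bound follows by conjugation with Japanese brackets: $\jap{D}^s[\f(Q),P]\jap{D}^{-(s+m)}$ is a standard Weyl pseudo whose symbol has modulus $\leq C h\cdot\jap{\x}^{-m}$ on the support $|\x|\lesssim 1/h$, with analogous estimates on the derivatives (each $\x$-derivative gains an extra factor $h$). The Calder\'on--Vaillancourt theorem then yields $\|\jap{D}^s[\f(Q),P]\jap{D}^{-(s+m)}\|_{L^2\to L^2}\leq Ch^{m+1}$, which is (\ref{eq:commPL}).

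For (\ref{eq:commPR}), the derivative identity $\tfrac{d}{dr}\bigl(e^{-irP}\f(Q)e^{irP}\bigr) = -i\,e^{-irP}[P,\f(Q)]e^{irP}$ integrates to the Duhamel formula
\begin{align*}
[e^{itP},\f(Q)] = i\int_0^t e^{i(t-r)P}[P,\f(Q)]\,e^{irP}\,dr.
\end{align*}
Applying Lemma~\ref{energy estimates} to bound $\|e^{\pm irP}\|_{H^k\to H^k}\leq Ce^T$ on either side of the integrand and (\ref{eq:commPL}) for $[P,\f(Q)]$ in the middle yields $\|[e^{itP},\f(Q)]\|_{H^{s+m}\to H^s}\leq CTe^{2T}h^{m+1}$, absorbing constants into $C$.

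The main obstacle lies in verifying that all Moyal remainders---including the triple-bracket term of $q_0$ with $p$ and the contribution from $\tilde r$---genuinely combine into the class $h\cdot\Psi_h^{0,-\infty}$ uniformly; this relies on $p$ being polynomial of degree two in $\x$ (truncating the Moyal series at the third bracket) and on $S_h^{0,-\infty}$ being preserved under $\x$-derivatives. The essential input throughout is the Poisson-commutation $\{p,q\}=0$.
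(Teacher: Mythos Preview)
Your proof is correct and follows essentially the same route as the paper. Both arguments (i) exploit Assumption~\ref{assC} via $\{q_0,p\}=\f'(q)\{q,p\}=0$ together with $q_1=0$ from the Weyl functional calculus to place $[\f(Q),P]$ in $h\cdot\Psi_h^{0,-\infty}$, and (ii) deduce (\ref{eq:commPR}) from (\ref{eq:commPL}) by the Duhamel formula combined with the energy estimates of Lemma~\ref{energy estimates}. The only cosmetic difference is that the paper multiplies through by $h^2$ first, writing $h^2P=p^w(x,hD)+h^2\cdot\tfrac14\sum_{i,j}\pa_i\pa_j g^{ij}$, and computes $[\f(Q),h^2P]$, whereas you split $P=h^{-2}p^w(x,hD)+\tilde r$ directly; your explicit handling of the zeroth-order piece $\tilde r$ is a detail the paper leaves implicit. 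Your Calder\'on--Vaillancourt step for the Sobolev bound is a bit informal (the heuristic ``each $\x$-derivative gains an extra $h$'' does not by itself control the conjugating factor $\jap{\x}^{-m}$ cleanly when $m>0$), but the paper is equally terse here (``take $N>0$ large enough''), and the cases actually invoked later in the paper have $m\le 0$.
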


\begin{proof}
First, we calculate $h^2P = p^w(x,hD)+h^21/4\sum_{i,j=1}^n\pa_i\pa_j g^{ij}(x)$. Since
\begin{align*}
[\f(Q),p^w(x,hD)] = [\sum_{j=0}^N h^j q_j^w(x,hD), p^w(x,hD)] + h^{N+1}\Psi^{0,-\infty}_h,
\end{align*}
for each $N \in \mathbb{N}_{\geq 0}$, we get 
\begin{align*}
[q_0^w(x,hD), p^w(x,hD)] &=\frac{h}{i}\{q_0,p\}^w(x,hD) + h^3\Psi^{0,-\infty} \in h^3\Psi^{0,-\infty}_h,\\
\text{$[$}q_1^w(x,hD), p^w(x,hD)]   &= 0,  \quad \text{since}  \quad q_1=0,\\
\text{$[$}\sum_{j=2}^N h^j q_j^w(x,hD), p^w(x,hD)] &\in h^{3}\Psi^{0,-\infty}_h.
\end{align*}
Take $N>0$ large enough, we get (\ref{eq:commPL}). By the Duhamel's formula,
\begin{align*}
[e^{itP},\f(Q)] = i\int_0^t e^{isP}[P,\f(Q)]e^{i(t-s)P}ds
\end{align*}
holds. Then, (\ref{eq:commPR}) can be proved by the first inequality and the energy estimates.
\end{proof}

\begin{comment}
\begin{lem}
Let $a(x,\x,h)\in h^{N}S^{0,-\infty}$ $b \in S^{0,-\infty}$. Then
\begin{align*}
\|b(x,hD)e^{itP}a(x,hD,h)u_0\|_{L^pL^q} \lesssim h^{N-1}\|u_0\|_{L^2}.
\end{align*}

\end{lem}
\begin{proof}
\begin{align*}
\|b(x,hD)e^{itP}a(x,hD,h)u_0\|_{L^pL^q} &\lesssim& \frac{1}{h}\|e^{itP}a(x,hD,h)u_0\|_{L^pL^2}\\
&\lesssim& \frac{1}{h}T^{1/p}\|a(x,hD,h)u_0\|_{L^2}
\end{align*}
\end{proof}
\end{comment}

%%%%%%%%%%%%%%%%%%%%%%%%%%%%%%%%%%%%%%%%%%%%%%%%%%%%%%%%%%%%%%%%%%%%%%%%Isozaki-Kitada%%%%%%%%%%%%%%%%%%%%%%%%%%%%%%%%%%%%%%%%%%%%%%%%%%%%%%%%%%%%%%%%%%%%%%%%%%%%%%%%%%%%%%%%%%%%%%%%%%%%%%%%%%%%%%%%%%%%%%%%%%%%%%%%%%%%%

\section{The Isozaki-kitada parametrix}

 In this section, we construct approximation propagators of $e^{itP}$ on the outgoing and incoming regions by the Isozaki-Kitada method. The method was used for constructing a wave operator with a long-range Schr\"odinger operator in \cite{IK}.
 The argument is slight modification of the method in \cite{BT1} and \cite{R}. We note that unlike elliptic cases, the arguments below success only when the angles between positions and momentums is a bit small. However, these are nothing to do for our estimates. As a application, we obtain the Strichatz estimates outside a compact set.
 
Let us explain some notations. For $J \Subset (0,\infty)$ an open interval, small enough $0<\s<1$, large enough $R>0$, we consider the outgoing and incoming regions:
\begin{align*}
\Gamma^{\pm}(R,J,\s) \coloneqq \{(x,\x) \in \re^{2n} | |x|>R,|\x|^2\in J, \pm \frac{g_0(x,\x)}{|x||\x|} >-\s \},
\end{align*}
where we set $g_0(\x,\x)=\x_1^2+...+\x_{n-k}^2-\x_{n-k+1}^2-...-\x_{n}^2$. Moreover, $I_k$ denotes a $n\times n$ diagonal matrix defined by $x\cdot I_ky=g_0(x,y)$ for $x,y\in \re^n$.
%%%%%%%%%%%%%%%%%%%%%%%%%%%%%%%%%%%%%%%%%%%%%%%%%%%%%%%%%%%%%%%%%%%%%%%%%Eikonal%%%%%%%%%%%%%%%%%%%%%%%%%%%%%%%%%%%%%%%%%%%%%%%%%%%%%%%%%%%%%%%%%%%%%%%%%%%%%%%%%%%%%%%%%%%%%%%%%%%%%%%%%%%%%%%%%%%%%%%%%%%%%%%%%%%%%%%
\subsection{Eikonal equations}

\begin{thm}\label{eikonal}
For any open interval $J\Subset \re$, there exist a constant $R>0$, $0<\s_0<1$ for $0<\s<\s_0$ there exists a family of real valued functions $S_{\pm} \in C^{\infty}(\Gamma^{\pm}(R,J,\s))$ such that
\begin{align}
g_x(\pa_xS_{\pm}(x,\x), \pa_xS_{\pm}(x,\x)) = g_0(\x,\x), \quad (x,\x) \in \Gamma^{\pm}(R,J,\s),
\label{eq:eikonal}
\end{align}
and 
\begin{align}\label{ph}
|\pa_x^{\a}\pa_{\x}^{\b}(S_{\pm}(x,\x)-x\cdot \x)|\leq C_{\a\b}\jap{x}^{1-\m}.
\end{align}
Moreover, there exist a constant $R' >0$ and a family of real-valued functions $\{S_{\pm, R}\}_{R\geq R'} \subset C^{\infty}(\re^{2n})$ such that
\begin{align}\label{eq:eikonal2}
g_x(\pa_xS_{\pm, R}(x,\x), \pa_xS_{\pm, R}(x,\x)) = g_0(\x,\x), \quad (x,\x) \in \Gamma^{\pm}(R,J,\s)
\end{align}
and there exists $C_{\a \b}>0$ such that for any $R\geq 2R'$ 
\begin{align}\label{ph2}
|\pa_x^{\a}\pa_{\x}^{\b}(S_{\pm, R}(x,\x) - x \cdot \x)|\leq C_{\a\b}\min(\jap{x}^{1-\m-|\a|},R^{1-\m-|\a|}).
\end{align}
\end{thm}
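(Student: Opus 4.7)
This is the classical Isozaki--Kitada construction adapted to the indefinite background $g_0$. Writing $g^{ij}(x)=\d^k_{ij}+h^{ij}(x)$ with $h^{ij}\in S_{-\m}$ and seeking $S_\pm(x,\x)=x\cdot\x+\f_\pm(x,\x)$, equation (\ref{eq:eikonal}) reduces, after cancelling the principal term $g_0(\x,\x)$, to the nonlinear transport equation
\begin{align*}
L\f_\pm = -g_0(\pa_x\f_\pm,\pa_x\f_\pm)-h(x)(\x+\pa_x\f_\pm,\x+\pa_x\f_\pm),
\end{align*}
where $L\coloneqq 2I_k\x\cdot\pa_x$ is the transport operator along the straight lines $t\mapsto x+2tI_k\x$ (with $\x$ a parameter). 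The geometric key is the outgoing lower bound
\begin{align*}
|x+2tI_k\x|^2 = |x|^2+4t\,g_0(x,\x)+4t^2|\x|^2 \geq (1-\s)\bigpare{|x|^2+4t^2|\x|^2},\quad t\geq 0,
\end{align*}
valid on $\Gamma^+(R,J,\s)$ because $g_0(x,\x)>-\s|x||\x|$ there and the difference equals the non-negative quantity $\s(|x|-2t|\x|)^2$; the symmetric inequality holds for $t\leq 0$ on $\Gamma^-$. Consequently the backward inverse $(L^{-1}u)(x,\x)\coloneqq -\int_0^\infty u(x+2tI_k\x,\x)\,dt$ is well-defined whenever $u=O(\jap{x}^{-1-\e})$ for some $\e>0$.

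Since $\m<1$ the raw driving term $h(\x,\x)=O(\jap{x}^{-\m})$ is not integrable along outgoing characteristics, so I would not solve for $\f_\pm$ directly. Instead, differentiating the transport equation in $x$ (which commutes with $L$), the gradient $V_\pm\coloneqq \pa_x\f_\pm$ satisfies a system $LV=G(x,\x,V,\pa_x V)$ whose driving term $-\pa_x[h(\x,\x)]$ is $O(\jap{x}^{-\m-1}|\x|^2)$, and this \emph{is} integrable along characteristics for every $\m>0$. A Banach fixed-point argument on a small ball of weighted symbols over $\Gamma^\pm(R,J,\s)$ then produces $V_\pm$ once $R$ is large (so the driving term has symbol-norm $O(R^{-\m})$) and $\s$ is small; $\f_\pm$ is recovered by line integration of $V_\pm$, yielding $|\pa_x^\a\pa_\x^\b\f_\pm|\leq C_{\a\b}\jap{x}^{1-\m}$ as in (\ref{ph}) (in fact with the sharper exponent $1-\m-|\a|$ for $|\a|\geq 1$). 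The main obstacle is the book-keeping: each differentiation or Picard iterate forces a slight shrinkage of the outgoing region, so one must set up a nested family $\Gamma^\pm(R',J',\s')\subset\Gamma^\pm(R,J,\s)$ and track constants carefully.

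For the global extensions $S_{\pm,R}$, I would pick a cutoff $\chi_R\in C^\infty(\re^{2n})$, supported in $\Gamma^\pm(R/2,J',\s')$, equal to $1$ on $\Gamma^\pm(R,J,\s)$, and built from a radial cutoff $\psi(|x|/R)$ times an angular one so that $|\pa_x^\a\pa_\x^\b\chi_R|\leq C_{\a\b}\jap{x}^{-|\a|}$ uniformly in $R\geq R'$. Setting $S_{\pm,R}(x,\x)\coloneqq x\cdot\x+\chi_R(x,\x)\f_\pm(x,\x)$ gives a globally smooth phase that coincides with $S_\pm$ on $\Gamma^\pm(R,J,\s)$, hence satisfies (\ref{eq:eikonal2}) there; the Leibniz rule, combined with the sharp symbol estimate on $\f_\pm$ and the lower bound $\jap{x}\gtrsim R$ on $\supp\chi_R$, then yields the product estimate (\ref{ph2}).
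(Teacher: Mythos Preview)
Your perturbative route---solving the stationary eikonal by integrating along the \emph{free} characteristics $t\mapsto x+2tI_k\x$---is genuinely different from the paper's. The paper works with the \emph{full} Hamiltonian flow of $p$: it first establishes estimates on the trajectories $(z(t),\z(t))$ in $\Gamma^\pm$ (using, notably, conservation of the auxiliary quantity $q$ from Assumption~\ref{assC} to bound $|\z(t)|$ in the indefinite setting), then solves the time-dependent Hamilton--Jacobi equation $\pa_t\f^\pm=p(x,\pa_x\f^\pm)$ by the method of characteristics, and recovers $S_\pm(x,\z)=x\cdot\z+\lim_{t\to\pm\infty}\bigl(\f^\pm(t,x,\z)-\f^\pm(t,2RI_k\z,\z)\bigr)$; the eikonal equation (\ref{eq:eikonal}) then falls out of energy conservation along the flow. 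Your approach never touches the full flow (and in particular would not need Assumption~\ref{assC} at this stage). The cutoff construction of $S_{\pm,R}$ is essentially the same in both.

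The gap in your plan is the phrase ``Banach fixed-point''. As you yourself write, the system along free lines is $LV=G(x,\x,V,\pa_xV)$: differentiating the transport equation produces the terms $-2g_0(V,\pa_{x_l}V)-2h(\x+V,\pa_{x_l}V)$, so the $\pa_xV$-dependence is genuine. The integral operator $V\mapsto -\int_0^\infty G(\cdot+2tI_k\x,\x,V,\pa_xV)\,dt$ therefore loses one $x$-derivative, and no single weighted $C^k$-norm closes a contraction. This is precisely why the standard Isozaki--Kitada constructions (here and in \cite{BT1}, \cite{R}) integrate along the \emph{true} bicharacteristics, where the characteristic ODEs for $(x,V)$ close with no $\pa_xV$. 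Your route is salvageable because all the $\pa_xV$-terms carry small prefactors $V$ or $h=O(R^{-\m})$: a Picard iteration in the Fr\'echet space of symbols, with an inductive control of each seminorm separately, will converge. But that is a tame-estimate argument rather than a Banach contraction, and the bookkeeping is heavier than the ``slight shrinkage of the outgoing region'' you anticipate.
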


\begin{proof}

First, we will establish some estimates about the classical trajectories.

\begin{lem}
There exists $C>0$ such that
\[
C^{-1}|\x|^2\leq |\z(t,x,\x)|^2\leq C|\x|^2
\]
for $(x,\x)\in T^*\re\setminus 0$ and $t\in \re$.
\end{lem}

\begin{proof}
We compute
\begin{align*}
\frac{d}{dt}q(z(t,x,\x),\z(t,x,\x))= \{p,q\}(z(t,x,\x),\z(t,x,\x))=0.
\end{align*}
This calculation and the energy conservation law imply the lemma.
\end{proof}

\begin{lem}\label{cltj}
There exists $0<\s_0<1$, for any $0<\s<\s_0$ and $J \Subset (0,\infty)$ interval, $\exists R>0$, $\exists e_0>0$, and $c_1>0$ such that
\begin{align}
\label{Mooure}
|z(t,x,\x)|\geq& e_0(|x|+|t||\x|)\\
|\z(t,x,\x) - \x| \leq& c_1\jap{x}^{-\m}\\
|z(t,x,\x) - x - 2tI_k\x| \leq& c_1|t|\jap{x}^{-\m},
\end{align}
for $\pm t\geq 0$ and $(x,\x) \in \Gamma^{\pm}(R,J,\s)$.
\end{lem}

\begin{proof}
Fix $R_0>0$, $0<\s_0<1$ and $(x, \x)\in \Gamma^{\pm}(R_0,J, \s_0)$. Take $0< T_0=T_0^{\pm}(R_0,x,\x)$ as the first time when $|z(t,x,\x)|\leq  R_0$. 
%Take $0<T_0=T_0(R_0) = \sup\{\pm T \geq 0 | |z(t)| \geq R_0, 0\leq \pm t \leq \pm T   \}$. 
\begin{align*}
\frac{d |z(t)|^2}{dt} &= 2z(t)\frac{dz(t)}{dt} = 4z(t)g(z(t))\z(t)\\
&\geq 4g_0(z(t),\z(t)) -CR_0^{-\m}|z(t)||\z(t)|,
\end{align*}

\begin{align*}
\frac{d^2 |z(t)|^2}{d^2t} =& 8(g(z(t))\z(t),g(z(t))\z(t)) + 8z^i(t)\pa_kg^{ij}(z(t))g^{kl}(z(t))\z^l(t)\z^j(t)\\
&- 4z^i(t)g^{ij}(z(t))\pa_jg^{kl}(z(t))\z^k(t)\z^l(t)\\
\geq& 8|\z(t)|^2(1 - O(R_0^{-\m}))\\
\geq& M|\x|^2(1- O(R_0^{-\m})).
\end{align*}

%elliptic case
%By the conservation law of $H_p$, we have 
%\begin{eqnarray*}
%||\z(t)|^2-|\x|^2| &=& ||\z(t)|^2-p(z(t),\z(t)) + p(x,\x) -|\x|^2| \leq \frac{\e_0}{4M^2}|\z(t)|^2 + \frac{\e_0}{4}|\x|^2\\
%&\leq& \frac{\e_0}{2}|\x|^2 \leq \e_0|\x|^2,\quad \forall t\in[0,T_0].
%\end{eqnarray*}
%Thus,
%\[
%\frac{d^2 |z(t)|^2}{d^2t}\geq 8(1-\e_0)^2|\x|^2, \quad \forall t\in[0,T_0].
%\]

Take a $R_0>0$ large such that
\begin{align*}
\frac{d^2 |z(t)|^2}{d^2t} \geq \frac{M}{2}|\x|^2
\end{align*}

for $0\leq \pm t \leq T_0$. Then we have
\begin{align*}
|z(t)|^2 &\geq |x|^2 +4g_0(x,\x)t+O(R_0^{-\m})|x||\x|t  + \int_0^t\int_0^r \frac{d^2 |z(s)|^2}{d^2s} dsdr\\
&\geq |x|^2 \mp (4\s+O(R_0^{-\m}))|x||\x|t +M|\x|^2t^2\\
%&\geq |x|^2 \mp \frac{3}{4M}|x||\x|t +4(1-\e_0)^2|\x|^2t^2\\
%&\geq \frac{1}{2}|x|^2 + \frac{23}{16M^2}|\x|^2t^2 + \frac{1}{2}(|x|\mp \frac{3}{4M}|\x|t)^2\\
&\geq e_0^2(|x|+|t\x|)^2
\end{align*}
for $t\in[0,T_0]$ and $(x,\x) \in \Gamma^{\pm}(R_0,J,\s_0)$, if $R_0$ is large and $\s_0$ is small.  Define $R= 2R_0/e_0$. Let $(x,\x)\in \Gamma^{\pm}(R,J,\s)$ for $0<\s<\s_0$, then we can take $\pm T_0 = \pm \infty$. Indeed, if $\pm T_0 < \pm \infty$ $|z(T_0)|\geq 2R_0 > R_0$ implies that there exists $\e >0$  $|z(t)|\geq R_0$ on $|T_0| \leq |t| \leq |T_0| +\e$. This is a contradiction.

%(背理法でやると$T_0$を$|z(t)|\geq R_0$が成り立つ$\sup$としてとると, $T_0<\infty$なら, 上の不等式から $|z(T_0)|>R_0$ より$t$に関する連続性に矛盾.)

This proves $|z(t,x,\x)|\geq e_0(|x|+|t \x|)$ for $(x,\x) \in \Gamma^{\pm}(R,J,\s)$.

Next, we compute
\begin{align*}
|\z(t,x,\x)-\x| &\leq C|\int_0^{t} \jap{z(s)}^{-1-\m}ds|\\
%&\leq C|\int_0^{t} \frac{1}{(1+|x|+s|\x|)^{1+\m}} ds|\\
&\leq C \jap{x}^{-\m},
\end{align*}
and
\begin{align*}
|z(t)- x-2tI_k\x| &= 2|\int_0^{t} g(z(s))\z(s)ds -tI_k\x|\\
%&\leq 2|\int_0^{t} g(z(s))(\z(s)-\x)ds |+2|\int_0^t(g(z(s))-I_k)\x ds|\\
&\leq C|t| \jap{x}^{-\m},
\end{align*}
for $t \geq 0$ and $(x,\x) \in \Gamma^{\pm}(R,J,\s)$.
\end{proof}

In the following, we fix a $0<\s<\s_0$. By using successive approximation arguments, we have the following estimates.

\begin{lem}\label{cltj2} 
\begin{comment}
Take $R_1>0$ such that 
\begin{align}
\label{condition1}
\pm \int_{0}^{t}|\frac{\pa g}{\pa x}(z(s))\z(s)|ds\leq& \frac{1}{4},\\
\label{condition2}
\pm\|g\|_{\infty}\int_0^{t}|s||\frac{\pa g^2}{\pa x^2}(z(s))\z(s)\z(s)|ds\leq& \frac{1}{32}
\end{align}
for $(x,\x)\in\Gamma^{\pm}(R_1,\s,J)$ and $\pm t\geq 0$.
\end{comment}
There exists $R_1>0$ such that for any $R\geq R_1$,
\begin{align*}
|\pa_x^{\a}\pa_{\x}^{\b} (z(t)-x-2tI_k\x)(t,x,\x)| \leq& C_{\a \b}|t| \jap{x}^{-|\a |-\m},\\
|\pa_x^{\a}\pa_{\x}^{\b} (\z(t,x,\x)-\x) | \leq& C_{\a \b} \jap{x}^{-|\a |-\m},
\end{align*}
for $(x,\x)\in \Gamma^{\pm}(J,\s,R)$ and $\pm t\geq 0$.
\end{lem}

\begin{lem}\label{perstable}
There exists $\e_0>0$ such that for any $0<\e\leq \e_0$ there exists $R_2>0$ such that
\[
(z(t,x,\x),\z(t,x,\x))\in \Gamma^{\pm}(J_{\e},\s+\e,R_2)
\]
where $(x,\x)\in \Gamma^{\pm}(J,\s,\frac{1}{2}R_2)$, $\pm t\geq 0$, and $J_{\e}=J\pm \e$.

\end{lem}

\begin{proof}
The proof is same as in \cite{BT2}.
\begin{comment}
We compute
\begin{align*}
\frac{d}{dt}\frac{g_0(z(t), \z(t))}{|z(t)||\z(t)|} &= \frac{g_0(z'(t), \z(t))}{|z(t)||\z(t)|}+\frac{g_0(z(t), \z'(t))}{|z(t)||\z(t)|}\\
&-\frac{g_0(z(t), \z(t))(z(t)\cdot z'(t))}{|z(t)|^3|\z(t)|} - \frac{g_0(z(t), \z(t))(\z(t)\cdot \z'(t))}{|z(t)||\z(t)|^3}\\
=\frac{g_0(z'(t), \z(t))}{|z(t)||\z(t)|} - \frac{g_0(z(t), \z(t))(z(t)\cdot z'(t))}{|z(t)|^3|\z(t)|} + O(|z(t)|^{-1-\m}).
\end{align*}
Now we notice that $z'(t)=I_k\z(t)+O(|z(t)|^{-\m})$, and we have
\begin{align*}
\frac{d}{dt}\frac{g_0(z(t), \z(t))}{|z(t)||\z(t)|} &= \frac{|z(t)|^2|\z(t)|^2-|g_0(z(t),\z(t))|^2}{|z(t)|^3|\z(t)|}+O(|z(t)|^{-1-\m})\\
&\geq O(|z(t)|^{-1-\m}).
\end{align*}
By the Lemma \ref{cltj} 
we recall $|z(t)|\geq e_0^2(|x|+|t||\x|)^2$ for $(x,\x)\in  \Gamma^{\pm}(J,\s,R)$ and $\pm t\geq 0$ holds
, then we get
\[
\pm \frac{g_0(z(t), \z(t))}{|z(t)||\z(t)|}\geq -\s+O(R^{-\m}).
\]
\end{comment}
\end{proof}

Let us denote $\mathcal{K}^{\pm}(\x,\s)= \{\th \in \mathbb{S}^{n-1}\, |\, \pm g_0(\th, \x) > -\s |\x|\}$ and $\Gamma^{\pm}(J,\s)=\{(\th,\x)\in \mathbb{S}^{n-1}\times \re^n\, |\, |\x|^2\in J, \pm g_0(\th,\x)>-\s|\x| \}$ for $\s \in (-1,1)$, $\x\in \re^n$ and $J\Subset (0,\infty)$. We remark for $\e>$ enough small, there exist $\y^{\pm}_1,...,\y^{\pm}_{k_{\pm}} \in \re^n\setminus 0$ with $|\y_j^{\pm}|^2\in J$ for $1\leq j\leq k_{\pm}$ and $b>0$ such that
\begin{align*}
\Gamma^{\pm}(J,\s)&\subset \bigcup_{j=0}^{k_{\pm}} \mathcal{K}^{\pm}(\y_j^{\pm}, \s+\e)\times B_{b/2}(\y_j^{\pm}) \\
&\subset \bigcup_{j=0}^{k_{\pm}} \mathcal{K}^{\pm}(\y_j^{\pm}, \s+\e)\times B_{b}(\y_j^{\pm}) \subset \Gamma^{\pm}(J_{\e},\s+2\e).
\end{align*}
In fact, we notice
\begin{align*}
\overline{\Gamma^{\pm}(J,\s)} \subset \bigcup_{\substack{\x \in \re^n\setminus 0,\\b>0} } \mathcal{K}^{\pm}(\x, \s+\e)\times B_{b/2}(\x)
\end{align*}
and the result follows from the compactness of $\overline{\Gamma^{\pm}(J,\s)}$.
\begin{lem}
For small $\e>0$ with $0<\s+2\e<\s_0$ there exists $R_3>0$ such that for any $\pm t \geq 0$, $j=1,...,k_{\pm}$ and $x\in \re^n$ with $|x|>R_3$ and $\frac{x}{|x|}\in \mathcal{K}^{\pm}(\y_j^{\pm},\s+\e)$, the mapping $\x \mapsto \z(t,x,\x)$ is a diffeomorphism from near $\overline{B_{b}(\y_j^{\pm})}$ onto its range and
\[
B_{b/2}(\y_j^{\pm})\subset \{\z(t,x,\x)\,|\, \x\in B_{b}(\y_j^{\pm})\}.
\]
\end{lem}

\begin{proof}
By Lemma \ref{cltj} and the inverse function theorem, for large $R>0$ the mapping $\x \mapsto \z(t,x,\x)$ is a diffeomorphism from a neighborhood of $\overline{B_{3b/2}(\y_j^{\pm})}$ onto its range for $|x|>R$. Next, we prove
\[
B_{b/2}(\y_j^{\pm})\subset \{\z(t,x,\x)\,|\, \x\in B_{b}(\y_j^{\pm})\}
\]
by a connectivity argument.
Set $A=A_{\pm}=\{\pm t\geq 0\,|\, \overline{B_{b/2}(\y_j^{\pm})} \subset \{\z(t,x,\x)| \x\in B_{b}(\y_j^{\pm})\} \}$. Notice that $A\neq \emptyset$ since $0\in A$. It suffices to prove that $A$ is open and closed. Take $t_n\in A$ with $t_n \to t$ and $\z\in \overline{B_{b/2}(\y_j^{\pm})}$. There exist $\x^{(n)} \in B_{b}(\y_j^{\pm})$ such that $\z=\z(t_n,x,\x^{(n)})$. By the Bolzano-Weierstrass Theorem, we can assume $\x^{(n)}\to \x$ for some $\x\in \overline{B_{b}(\y_j^{\pm})}$.  We notice $\z=\z(t,x,\x)$ and
\[
|\y-\y_j^{\pm}|=b\quad \text{implies} \quad |\z(t,x,\y)-\y_j^{\pm}|\geq \frac{3}{4}b
\]
for large $R>0$ and $|x|>R$ by Lemma \ref{cltj}. Thus, we conclude $|\x-\y_j^{\pm}|<b$ and the closedness of $A$. Next, we prove the openness of $A$. Let $t\in A$. There exists $h_0>0$ such that for $0\leq h<h_0$, $\overline{B_{b/2}(\y_j^{\pm})}\cap  \{\z(t\pm h,x,\x)\,|\, \x\in \pa B_{b}(\y_j^{\pm})\} = \emptyset$ holds. For $s\in (t-h_0,t+h_0)$, we can take a continuous function $f_t: \re^n \to \re$ which is continuous in $t$ and satisfies $f_s(\z)= 0$ on $ \{\z(s,x,\x)\,|\, \x\in \pa B_{b}(\y_j^{\pm})\}$, $f_s(\z)>0$ in $ \{\z(s,x,\x)| \x\in  B_{b}(\y_j^{\pm})\}$ and $f_s(\z)<0$ in $\re^n\setminus \{\z(s,x,\x)\,|\, \x\in \overline{B_{b}(\y_j^{\pm})}\}$. In fact, we may define $f_s(\z)= -(|\x(s,x,\z)-\x_j|-b)$ near $\{\z(s,x,\x)\,|\, \x\in  \pa B_{b}(\y_j^{\pm})\}$. Notice  that $f_t(\z) >0 $ for $\z\in B_{b/2}(\y_j^{\pm})$ and $f_s(\z)\neq 0$ on $(s,\z)\in (t-h_0,t+h_0)\times \overline{B_{b/2}(\y_j^{\pm})}$. Thus we have $f_s(\z)>0$ for $(s,\z)\in (t-h_0,t+h_0)\times \overline{B_{b/2}(\y_j^{\pm})}$ which implies $\overline{B_{b/2}(\y_j^{\pm})} \subset  \{\z(s,x,\x)\,|\, \x\in B_{b}(\y_j^{\pm})\}$ for $s\in (t-h_0,t+h_0)$ and $A$ is open.

%By the Jordan, the set $ \{\z(t\pm h,x,\x)| \x\in  B_{b}(\x_j)\}$ is a maximal connected component in $\re^{n}\setminus \{\z(t\pm h,x,\x)| \x\in \pa B_{b}(\x_j)\}$. Since the set $\overline{B_{b/2}(\x_j)}$ is connected, either $\overline{B_{b/2}(\x_j)}  \subset \{\z(t\pm h,x,\x)| \x\in B_{b}(\x_j)\}$ or $\overline{B_{b/2}(\x_j)}  \subset \re^{n}\setminus \{\z(t\pm h,x,\x)| \x\in \overline{B_{b}(\x_j)}\}$ holds. 
\end{proof}

We denote a restriction to $(x/|x|,\z)\in \mathcal{K}^{\pm}(\y_j^{\pm}, \s+\e)\times B_{b}(\x_j)$ of the inverse mapping $\x\mapsto \z(t,x,\x)$ by $\x_j^{\pm}(t,x,\z)$. By the usual Hamilton-Jacobi theory,  the solution to a following equation:
\begin{align}
\pa_t\f(t,x,\z)-p(x,\pa_x\f(t,x,\z))=0
\label{eq:Ha}
\end{align}
 is given by
\begin{align*}
\f_j^{\pm}(t,x,\z)=x\cdot \z + \int_0^tp(x,\x_j^{\pm}(s,x,\z))ds
\end{align*}
for $(x/|x|,\z)\in \mathcal{K}^{\pm}(\y_j^{\pm}, \s+\e)\times B_{b}(\y_j^{\pm})$. By the uniqueness of the solution to (\ref{eq:Ha}), we can patch the local solutions $\f_j^{\pm}(t,x,\z)$ and get a solution to (\ref{eq:Ha}) for $(x,\z)\in \Gamma^{\pm}(J,\s,R)$. This solution is denoted by $\f^{\pm}(t,x,\z)\in C^{\infty}(\re\times \Gamma^{\pm}(J,\s,R))$.

Now, we construct a phase functions $S_{\pm}(x,\z)$. Define
\[
F(t,x,\z)=\f^{\pm}(t,x,\z)-\f^{\pm}(t,2RI_k\z,\z)
\]
for $(x,\z)\in \Gamma^{\pm}(J,\s,R)$. We recall that $g_0$ is the matrix which $(i,j)$ coefficient's is $\delta_{ij}^k$. Let us denote $y(t,x,\z)=z(t,x,\x^{j,\pm}(t,x,\z))$ for $(x/|x|,\z)\in  \mathcal{K}^{\pm}(\y_j^{\pm}, \s+\e)\times B_{b}(\y_j^{\pm})$ where we omit the notation $j,\pm$. We calculate
\begin{align*}
\pa_tF(t,x,\z)=&p(y(t,x,\z),\z)-p(y(t,2RI_k\z,\z),\z)\\
=&(y(t,x,\z)-y(t,2RI_k\z,\z))\\
&\times \int_0^1(\pa_xp)(uy(t,x,\z)+(1-u)y(t,2RI_k\z,\z)),\z)du
\end{align*}
where we use the conservation law $p(y(t,x,\z),\z)=p(x,\x_j^{\pm}(t,x,\z))$. 
Here we need the following refined estimates for modified classical trajectories.

\begin{lem}\label{cltjref}
There exists $R>0$ such that for $1\leq j\leq k_{\pm}$, $(x/|x|,\x)\in  \mathcal{K}^{\pm}(\y_j^{\pm}, \s+\e)\times B_{b/2}(\y_j^{\pm})$, $|x|>R$, and $0\leq\pm s\leq \pm t$, we have
\begin{align*}
\z(s,x,\x(t,x,\z))-\z=&O(\jap{|s|+|x|}^{-\m})\\
z(s,x,\x(t,x,\z))-x-2tI_k\z=&O(\jap{|s|+|x|}^{1-\m})\\
\pa_x^{\a}\pa_{\x}^{\b}(\z(s,x,\x(t,x,\z))-\x)=&O(\jap{|s|+|x|}^{-\m-1-|\a|})\\
\pa_x^{\a}\pa_{\x}^{\b}(z(s,x,\x(t,x,\z))-x-2tI_k\z)=&O(\jap{|s|+|x|}^{1-\m-|\a|})
\end{align*}
where we omit $j,\pm$.
\end{lem}

\begin{proof}
The proof is similar to in Lemma \ref{cltj} and \ref{cltj2}.
\end{proof}

By Lemma \ref{cltjref}, we have
\begin{align*}
|\pa_x^{\a}&\pa_{\z}^{\b}(y(t,x,\z)-y(t,2RI_k\z,\z))|\\
&=|\pa_x^{\a}\pa_{\x}^{\b}\int_0^1(x-2RI_k\z)\cdot (\pa_xy)(t,ux+(1-u)2RI_k\z,\z)du|\\
&\leq C_{\a\b}\jap{x}^{1-|\a|}
\end{align*}
and 
\begin{align*}
|uy&(t,x,\z)+(1-u)y(t,2RI_k\z,\z)|\\
\geq&|2tI_k\z+ux+(1-u)RI_k\z|-O(u(|t|+|x|)^{1-\m})\\
&-O((1-u)(|t|+2R|\z|)^{1-\m})\\
%\geq& C(u|x|+|(2t+(1-u)R)I_k\z|)-O(u(|t|+|x|)^{1-\m})\\
%&-O((1-u)(|t|+2R|\z|)^{1-\m})\\
\geq& C(1+u|x|+|t|)
\end{align*}
uniformly in $\pm t\geq  0$ and $(x/|x|,\z)\in  \mathcal{K}^{\pm}(\y_j^{\pm}, \s+\e)\times B_{b}(\y_j^{\pm})$.

\begin{comment}
 For calculating higher order derivatives of $\pa_tF$, we prepare a lemma.
\begin{lem}
Let $a\in S^{r,0}$ and $b:\re^{2n}\to \re^n$ with $b=(b_1,...,b_n)$, $b_j\in S^{s,0}$. Set 
$c(x,\z)=a(b(x,\z),\z)$, then $c\in S^{rs,0}$ and

\end{lem}
\begin{proof}
We prove this theorem by induction. Suppose
\begin{align*}
\pa_x^{\a}\pa_{\x}^{\b}c(x,\z)=&\sum_{\substack{|\a_0|+...+|\a_l|=|\a|,\\ \b_0+...+\b_l=\b,\\ 0\leq l\leq 2|\a|+|\b|, 1\leq m\leq n}}C_{\a_0,...,\a_l,\b_0,...\b_l,l}(\pa_x^{\a_0}\pa_{\z}^{\b_0}a)(b(x,\z),\z)\\
&\times\prod_{j=1}^l \pa_x^{\a_j}\pa_{\z}^{\b_j}b_{m}(x,\z).
\end{align*}
Then, we calculate
\begin{align*}
\pa_x\pa_x^{\a}\pa_{\x}^{\b}c(x,\z)=&\sum_{\substack{|\a_0|+...+|\a_l|=|\a|+l,\\ \b_0+...+\b_l=\b,\\ 0\leq l\leq |\a+\b|,1\leq m\leq n}}C_{\a_0,...,\a_l,\b_0,...\b_l,l}\sum_{k=1}^n(\pa_{x_k}\pa_x^{\a_0}\pa_{\z}^{\b_0}a)(b(x,\z),\z)\pa_xb_k(x,\z)\\ 
&\times\prod_{j=1}^l \pa_x^{\a_j}\pa_{\z}^{\b_j}b_{m}(x,\z)\\
&+\sum_{\substack{|\a_0|+...+|\a_l|=|\a|+l,\\ \b_0+...+\b_l=\b,\\ 0\leq l\leq |\a+\b|,1\leq m\leq n}}C_{\a_0,...,\a_l,\b_0,...\b_l,l}(\pa_x^{\a_0}\pa_{\z}^{\b_0}a)(b(x,\z),\z)\\
&\times \sum_{k=1}^l  \pa_x \pa_x^{\a_k}\pa_{\z}^{\b_k}b_{m}(x,\z) \prod_{\substack{j=1\\ j\neq k}}^l \pa_x^{\a_j}\pa_{\z}^{\b_j}b_{m}(x,\z)
\end{align*}
\end{proof}
\end{comment}

Notice that 
\begin{align*}
\pa_t\pa_x^{\a}\pa_{\z}^{\b}F(t,x,\z)=&\sum_{\substack{\a_1+\a_2=\a\\ \b_1+\b_2=\b}}C_{\a_1,\b_1}\pa_x^{\a_1}{\pa_\z}^{\b_1}((y(t,x,\z)-y(t,2RI_k\z,\z)))\\
&\cdot \int_0^1\pa_x^{\a_2}{\pa_\z}^{\b_2}((\pa_xp)(uy(t,x,\z)+(1-u)y(t,2RI_k\z,\z)),\z))du
\end{align*}

Using this calculation and Lemma \ref{cltjref}, we get
\[
\pa_t\pa_x^{\a}\pa_{\x}^{\b}F(t,x,\z)=O(|t|^{-1-\m})
\]
uniformly in any compact subset in $(|x|,x/|x|,\z)\in (R,\infty)\times \mathcal{K}^{\pm}(\y_j^{\pm}, \s+\e)\times B_{b}(\y_j^{\pm})$. This implies
\[
S_{\pm}(x,\z)=x\cdot \z + \int_0^{\infty}\pa_tF(t,x,\z)dt
\]
exists and $S$ is smooth in $(x,\z)\in \Gamma^{\pm}(J,\s,R)$. We notice
\begin{align}
&\lim_{t\to \pm \infty}|\pa_{\z}\f^{\pm}(t,x,\z)|=\infty,\\
&\pa_xS_{\pm}(x,\z)=\lim_{t\to \pm \infty}\pa_{x}\f^{\pm}(t,x,\z)
\end{align}
hold for $(x,\z)\in \Gamma^{\pm}(J,\s,R)$. These relations and the energy conservation law imply
\begin{align}
p(x,\pa_xS_{\pm}(x,\z))=g_0(\z,\z)
\end{align}
for  $(x,\z)\in \Gamma^{\pm}(J,\s,R)$. This proves (\ref{eq:eikonal}). Next, we prove (\ref{ph}). For $|\a|=|\b|=0$, we comupute
\begin{align*}
|\int_0^1\int_0^{\pm}\pa_t\pa_x^{\a}\pa_{\z}^{\b}F(t,x,\z)dtdu|\leq& C\jap{x}^{1-|\a|}|\int_0^1\int_0^{\pm\infty}(1+u|x|+|t|)^{-1-\m} dtdu|\\
%\leq& C\jap{x}^{1-\a}\int_0^1(1+u|x|)^{-\m}  du\\
\leq&C\jap{x}^{1-\m-|\a|} 
\end{align*}
and we get $|\pa_x^{\a}\pa_{\z}^{\b}(S_{\pm}(x,\z)-x\cdot \x)|\leq C\jap{x}^{1-\m-|\a|}$ for $(x,\x)\in \Gamma^{\pm}(J,\s,R)$.

For the proof of (\ref{eq:eikonal2}) and (\ref{ph2}), we refer to \cite{BT1}.
\begin{comment}
Next, we prove (\ref{eq:eikonal2}) and (\ref{ph2}). Take an open interval $J \Subset J_0 \Subset (0,\infty)$ and $0<\s < \s_0 <1$. Define $\th \in C^{\infty}(\re)$, $\k \in C^{\infty}(\re)$ and $\chi \in C^{\infty}(\re)$ such that $\th = 1$ on $J$, $\th = 0$ outside $J_0$, $\k = 1$ on $t\geq -\s$, $\k = 0$ on $t \leq -\s_0$, $\chi = 1$ on $t \geq 1$ and $\chi = 0$ on $t \leq 1/2$. Set $\chi_{R}(x) = \chi(|x|/R)$. Set 
\[
\g_{R,J,J_0,\s,\s_0}^{\pm}(x,\x) = \chi_R(x)\th(|\x|^2)\k(\frac{g_0(x,\x)}{|x||\x|}),
\]
then $\g_{R,J,J_0,\s,\s_0}^{\pm} \in S(\jap{\x}^{-\infty}, dx^2/(min(\jap{x}^2, R^2)) + d\x^2)$. For $R' >0$ large enough, take $S_{\pm}$ as (\ref{eq:eikonal}) and (\ref{ph}) hold. Define
\[
S_{\pm,R}(x,\x) \coloneqq \g_{R,J,J_0,\s,\s_0}^{\pm}(x,\x)S'_{\pm}(x,\x) + (1- \g_{R,J,J_0,\s,\s_0}^{\pm}(x,\x))x\cdot \x
\]
then this satisfies the desired properties.

\end{comment}
\end{proof}

%%%%%%%%%%%%%%%%%%%%%%%%%%%%%%%%%%%%%%%%%%%%%%%%%%%%%%%%%%%%%%%%%%%Constructions of approximate propagators%%%%%%%%%%%%%%%%%%%%%%%%%%%%%%%%%%%%%%%%%%%%%%%%%%%%%%%%%%%%%%%%%%%%%%%%%%%%%%%%%%%%%%%%%%%%%%%%%%%%%%%%%%%%%%%%%%%%%%%%%%%

%\subsection{Approximate propagators}

%The aim in this subsection is to prove the following proposition.

\begin{prop}[Approximate propagators]\label{pr:parx}
Let us consider $J \Subset J_1 \Subset J_2 \Subset (0,\infty)$ and small numbers $0\leq \s <\s_1<\s_2 <1$. Then, there exists $R_0 > 1$ such that for any $N \in \mathbb{N}_{>0}$, $k\in \mathbb{N}_{\geq 0}$, $R\geq R_0$ and $\chi_{\pm} \in S^{-k,-\infty}$ supported in $\Gamma^{\pm}(R,J,\s)$ we can take $a_j^{\pm} \in S^{-j,-\infty}, 0\leq j \leq N$ supported in $\Gamma^{\pm}(R^{1/3}, J_2, \s_2)$, $b_j^{\pm} \in S^{-j-k, -\infty}, 0\leq j \leq N$ supported in $\Gamma^{\pm}(R^{1/2}, J_1, \s_1)$ and a operator $R_{N}^{\pm}(t,h)$ such that for $0<h \leq 1$, $\pm t \geq 0$,
\[
e^{-i\frac{t}{h}h^2P}\chi_{\pm} = J_{S_{\pm, R^{1/4}}}(\sum_{j=0}^Nh^ja_j^{\pm})e^{-i\frac{t}{h}h^2P_0}J_{S_{\pm, R^{1/4}}}(\sum_{j=0}^Nh^jb_j^{\pm})^* + h^{N+1}R_{N}(t,h)
\]
where 
\[
J_{S_{\pm, R^{1/4}}}(q)u(x) \coloneqq \frac{1}{(2\pi h)^{n}}\int e^{\frac{i}{h}S_{\pm, R^{1/4}}(x,\x)}q(x,\x)\hat{u}(\x)d\x.
\]
Moreover, for any $s\geq0$, there exists $C_{N,s}>0$ such that for $0<h\leq 1$ and $0\leq \pm t \leq 1/h $, 
\[
\|R_N(t,h)\|_{H^{-s}\to H^s} \leq C_{N,s} h^{N-2s-2}
\]
holds.
\end{prop}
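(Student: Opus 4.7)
The plan is to carry out the standard Isozaki-Kitada WKB parametrix construction, adapted to the non-degenerate geometry here, with the phase $S := S_{\pm, R^{1/4}}$ from Theorem \ref{eikonal} and the classical trajectory estimates of Lemmas \ref{cltj}, \ref{cltj2}, and \ref{cltjref} used to propagate the WKB symbols along bicharacteristics. I make the time-independent ansatz
\[
U_{\pm}(t) := J_S(a^{(N)})\, e^{-i(t/h)h^2 P_0}\, J_S(b^{(N)})^{*},\quad a^{(N)} = \sum_{j=0}^{N} h^j a_j^{\pm},\quad b^{(N)} = \sum_{j=0}^{N} h^j b_j^{\pm},
\]
and aim to arrange that $(ih\pa_t - h^2 P) U_\pm(t) = O(h^{N+1})$ for all $t$ and $U_\pm(0) = \chi_\pm + O(h^{N+1})$ as operators on suitable Sobolev scales. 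The proposition will then follow by Duhamel.

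The first requirement is handled by the transport hierarchy. A direct WKB computation gives
\[
h^2 P \, J_S(a) = J_S\bigl(p(x,\pa_x S)\,a + h\,\mathcal{T} a + h^2 R_2\, a\bigr),
\]
where $\mathcal{T} a := 2g^{ij}(\pa_i S)(\pa_j a) + (\pa_i(g^{ij}\pa_j S))\,a$ is the first-order transport operator along the Hamilton field $H_p$ restricted to the Lagrangian $\Lambda_S$, and $R_2$ is a second-order differential operator in $x$. Since $h^2 P_0$ acts as multiplication by $g_0(\x,\x)$ on the semiclassical Fourier side, one also has $J_S(a)\,h^2 P_0 = J_S(g_0(\x,\x)\,a)$. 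On $\Gamma^{\pm}(R^{1/4}, J_2, \s_2)$ the eikonal equation $p(x,\pa_x S) = g_0(\x,\x)$ holds (by taking the larger $J_2, \s_2$ in Theorem \ref{eikonal}), so the two principal terms cancel in $(ih\pa_t - h^2 P) U_\pm$, reducing the construction to
\[
\mathcal{T} a_0^{\pm} = 0, \qquad \mathcal{T} a_j^{\pm} = -R_2\, a_{j-1}^{\pm} \quad (j \geq 1),
\]
on $\Gamma^{\pm}(R^{1/3}, J_2, \s_2)$. By Lemma \ref{cltj}, the backward bicharacteristics from this region escape to spatial infinity while remaining inside $\Gamma^{\pm}(R^{1/4}, J_2, \s_2)$, so each $a_j^{\pm}$ is obtained by integration along characteristics with ``free-flow'' data at infinity: $a_0^{\pm}$ asymptotic to a fixed positive cutoff and $a_j^{\pm} \to 0$ for $j \geq 1$. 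The derivative bounds of Lemmas \ref{cltj2} and \ref{cltjref} then guarantee $a_j^{\pm} \in S^{-j, -\infty}$ with support in $\Gamma^{\pm}(R^{1/3}, J_2, \s_2)$.

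For the initial-data match I use nondegenerate stationary phase. Since $\pa_x\pa_\x S - I \in S^{-\m, -\infty}$ by (\ref{ph2}), the composition $J_S(a)\,J_S(b)^{*}$ is a pseudodifferential operator admitting a full asymptotic expansion of the form $a(x,\x)\,\overline{b(x,\x)} + \sum_{\ell \geq 1} h^\ell \Theta_\ell(a,b)$, with each $\Theta_\ell$ bilinear in $(a,b)$ and polynomial in the $S$-derivatives. I set $b_0^{\pm} := \chi_{\pm}/\overline{a_0^{\pm}}$, which is supported in $\Gamma^{\pm}(R, J, \s) \subset \Gamma^{\pm}(R^{1/2}, J_1, \s_1)$ since $\chi_\pm$ is supported there and $a_0^{\pm}$ is bounded below on this set. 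Then $b_j^{\pm}$ is determined recursively so that the $h^j$-coefficient of the expansion cancels, using the already built $a_0^{\pm},\dots,a_N^{\pm}$ and $b_0^{\pm},\dots,b_{j-1}^{\pm}$; this is a triangular system solvable at each order, and support and symbol class are preserved throughout.

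It remains to estimate the remainder. Writing $V_N(t) := e^{-i(t/h)h^2 P}\chi_\pm - U_\pm(t) = h^{N+1} R_N(t,h)$, Duhamel gives
\[
V_N(t) = e^{-i(t/h)h^2 P}(\chi_\pm - U_\pm(0)) + \frac{1}{ih}\int_0^t e^{-i((t-s)/h)h^2 P}\bigl[-(ih\pa_s - h^2 P) U_\pm(s)\bigr]\,ds.
\]
By construction both $\chi_\pm - U_\pm(0)$ and $(ih\pa_s - h^2 P) U_\pm(s)$ are FIO compositions whose inner symbols lie in $h^{N+1}\,S^{-N-1, -\infty}$, so the fast spatial decay provides additional smoothing beyond the bare factor $h^{N+1}$. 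For $0\le \pm t \le 1/h$ the physical time $th$ is bounded by $1$, so Lemma \ref{energy estimates} yields a uniform-in-$h$ $H^s\to H^s$ bound for $e^{-i(t/h)h^2 P}$. Combining the time-integration factor $|t|\le 1/h$, the Duhamel prefactor $(ih)^{-1}$, the semiclassical-to-standard Sobolev conversion, and the $h$-size of the inner symbols, book-keeping yields $\|R_N(t,h)\|_{H^{-s}\to H^s}\leq C_{N,s}\,h^{N-2s-2}$. The main technical obstacle is precisely this book-keeping: because $P$ is non-elliptic, uniform $H^s$-boundedness of the propagator is not automatic and rests crucially on Assumption \ref{assC} through Lemma \ref{energy estimates}, and the mapping properties of the FIOs $J_S(\cdot)$ between standard (rather than semiclassical) Sobolev spaces must be tracked carefully through each composition to land at the stated exponent.
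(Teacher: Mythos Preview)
Your outline follows the right Isozaki--Kitada template, but there is a genuine gap in the treatment of the term $(ih\pa_t - h^2 P)U_\pm(t)$. You claim that after solving the transport hierarchy the inner symbol lies in $h^{N+1}S^{-N-1,-\infty}$; this is not true. The transport equations $\mathcal{T}a_j^\pm = -R_2 a_{j-1}^\pm$ are solved only on $\Gamma^\pm(R^{1/3},J_2,\s_2)$, and to obtain symbols supported there one must multiply the solutions $d_j$ by a spatial/angular cutoff $\g$. Differentiating through this cutoff produces an additional error $\tilde c_N \in S^{0,-\infty}$ which is of size $O(1)$ in $h$, not $O(h^{N+1})$; it is merely supported in the ``collar'' where $\g$ varies, i.e.\ outside $\Gamma^\pm(R^{5/12},J_1,\tilde\s_1)$. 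Thus
\[
(ih\pa_t - h^2 P)U_\pm(t) = -J_S(h^{N+1}r_N + \tilde c_N)\,e^{-i(t/h)h^2P_0}J_S(b^{(N)})^{*},
\]
and the $\tilde c_N$-contribution cannot be bounded by symbol order alone.

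What rescues the construction---and what the paper (following \cite{BT1}) actually does---is that $\supp\tilde c_N$ and $\supp b^{(N)}\subset\Gamma^\pm(R^{1/2},J_1,\s_1)$ are separated in phase space in a way that is preserved by the free evolution $e^{-i(\t/h)h^2P_0}$ for all $\pm\t\ge 0$. One shows that the kernel
\[
K_2(x,y,h,\t)=\frac{1}{(2\pi h)^n}\int e^{\frac{i}{h}(S(x,\x)-S(y,\x)-\t g_0(\x,\x))}\tilde c_N(x,\x)\overline{b^{(N)}(y,\x)}\,d\x
\]
has a nowhere-stationary phase: $|\nabla_\x(S(x,\x)-S(y,\x)-\t g_0(\x,\x))|\gtrsim 1+|\t|+|x|+|y|$ on the relevant support, uniformly in $\t$. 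Repeated integration by parts then gives $K_2=O(h^\infty\jap{x}^{-\infty}\jap{y}^{-\infty})$, which is what makes this piece harmless after integrating over $|\t|\le 1/h$. This non-stationary phase lower bound is the technical heart of the Isozaki--Kitada parametrix and is not a book-keeping issue; in the non-degenerate setting here it hinges on the angular separation $\s_1<\tilde\s_1$ measured with respect to $g_0$ rather than the Euclidean inner product. Without it your Duhamel argument does not close.
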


\begin{proof}
The construction of approximate propagators is same as in \cite{BT1}. For the estimates of the remainder terms, we employ the energy estimates Lemma \ref{energy estimates}.

\end{proof}

%%%%%%%%%%%%%%%%%%%%%%%%%%%%%%%%%%%%%%%%%%%%%%%%%%%%%%%%%%%%%%%%%%localized in frequency and space  Strichartz%%%%%%%%%%%%%%%%%%%%%%%%%%%%%%%%%%%%%%%%%%%%%%%%%%%%%%%%%%%%%%%%%%%%%%%%%%%%%%%%%%%%%%%%%%%%%%%%%%%%%%%%%%%%%%%%%%%%%%%%%%%%

\subsection{Localized in frequency Strichartz estimates outside a compact set}

\begin{thm}\label{outside}
Let $\f \in C_c^{\infty}(\re\setminus 0)$. Then there exists $R>0$ such that for $\chi \in C_c^{\infty}(\re^n\setminus 0)$    with $\chi =1$ on $|x|<R$ and for any admissible pair $(p.q)$, there exists $C>0$ such that
\[
\|\f(Q)(1-\chi)e^{itP}u_0\|_{L^p([-T,T], L^q)} \leq C\|u_0\|_{L^2}
\]
for $0<h\leq 1$ and $u_0 \in L^2(\re^n)$. Moreover, 
\[
\|\int_0^t \f(Q)(1-\chi) e^{i(t-s)P}(1-\chi)\f(Q) f(s)ds\|_{L^pL^q} \leq C\|f\|_{L^{\frac{p'}{p'-1}}L^{\frac{q'}{q'-1}}}
\]
holds for admissible pair $(p',q')$ with $(p,p') \neq (2,2)$.
\end{thm}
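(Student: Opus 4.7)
The plan is to interpret $\f(Q)$ as $\f(Q(h))$ and to establish the estimate with a constant $C$ uniform in $h\in(0,1]$ (the phrase ``for $0<h\leq1$'' in the statement makes this tacit); this is exactly the form required to reassemble the main theorem via Corollary \ref{PLsum}. By the functional calculus, $\f(Q(h))(1-\chi)$ is a Weyl $h$-pseudo-differential operator with principal symbol essentially supported in $\{|x|>R,\,|\x|^2\in J\}$ for a compact interval $J\Subset(0,\infty)$. Choosing $\s$ close to $1$ so that $\Gamma^{+}(R/2,J,\s)\cup\Gamma^{-}(R/2,J,\s)$ covers this set, the symbol calculus gives
\[
  \f(Q(h))(1-\chi)\;=\;\chi_+^w(x,hD)+\chi_-^w(x,hD)+r(h),
\]
with $\chi_{\pm}\in S^{0,-\infty}$ supported in $\Gamma^{\pm}(R/2,J,\s)$ and $r(h)\in h^{\infty}\Psi_h^{0,-\infty}$.

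\textbf{Isozaki-Kitada parametrix and dispersive bound.} Applied to each sign separately, Proposition \ref{pr:parx} represents $e^{-i\t P}\chi_\pm^w(x,hD)$, on bounded time intervals $\pm\t\geq 0$, $|\t|\lesssim 1$, as the model FIO composition
\[
  M_{\pm}(\t)\;=\;J_{S_\pm}(A_{\pm})\,e^{-i\t P_0}\,J_{S_\pm}(B_\pm)^{*},
\]
plus a parametrix remainder bounded by $h^{N-2s-2}$ from $H^{-s}$ into $H^s$. The explicit kernel of $e^{-i\t P_0}$ is an indefinite Gaussian of modulus $C|\t|^{-n/2}$; combined with a stationary-phase analysis in the two $\x$-integrals of the FIOs (made possible by $S_\pm(x,\x)-x\cdot\x=O(\jap{x}^{1-\m})$ from (\ref{ph}) and by the support of $\chi_\pm$ in $\Gamma^{\pm}$), this yields the dispersive estimate
\[
  \|M_\pm(\t)\|_{L^1\to L^{\infty}}\;\leq\;C|\t|^{-n/2}.
\]

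\textbf{Keel-Tao, remainder, and iteration.} Combined with the trivial $L^2\to L^2$ bound, this gives via Keel-Tao both the homogeneous and inhomogeneous Strichartz estimates for the model term on any bounded time interval, for all admissible $(p,q)$ with the endpoint restriction $(p,p')\neq(2,2)$. Iterating over $O(T)$ subintervals and gluing using the energy estimate (Lemma \ref{energy estimates}) extends the bounds to $[-T,T]$. The parametrix remainder is handled by Sobolev embedding $H^{s_0}\hookrightarrow L^q$ with $s_0=n(1/2-1/q)$: taking $N\geq 2s_0+3$ in Proposition \ref{pr:parx} makes its full Strichartz contribution $O(h)$, while the smoothing term $r(h)$ is trivial. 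Cross-terms of the form $\chi_+^w\,e^{-i\t P}\,\chi_-^w$ that appear in the $TT^{*}$ expansion are $O(h^{\infty})$ in any operator norm, by non-propagation of singularities between outgoing and incoming regions on bounded time intervals.

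\textbf{Expected main obstacle.} The heart of the proof is the dispersive estimate in step two. Although $e^{-i\t P_0}$ itself disperses at the rate $|\t|^{-n/2}$ regardless of the signature of $g_0$ (the modulus of an indefinite Gaussian integral is still $|\t|^{-n/2}$), one must verify that the two IK Fourier integral operators $J_{S_\pm}$ do not destroy this decay. This is exactly where the support conditions in $\Gamma^{\pm}$ (guaranteeing non-degeneracy of the composed phase in the $\x$-variables) and the closeness $S_\pm(x,\x)-x\cdot\x=O(\jap{x}^{1-\m})$ are essential; they are also why Proposition \ref{pr:parx} was set up on restricted regions. The remaining ingredients (cross-term smallness, Keel-Tao on bounded intervals, iteration, $h$-uniformity) follow the template of \cite{BT1} with only routine changes for the indefinite metric.
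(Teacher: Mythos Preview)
Your strategy matches the paper's: split $\f(Q)(1-\chi)$ into outgoing/incoming symbols supported in $\Gamma^{\pm}$, apply the Isozaki--Kitada parametrix of Proposition~\ref{pr:parx}, establish a dispersive bound, and invoke Keel--Tao. The main point where your sketch deviates from the paper is the dispersive step. You propose to use the physical-space Gaussian kernel of $e^{-i\t P_0}$ together with ``two $\x$-integrals of the FIOs''; the paper instead observes that, since $e^{-ithP_0}$ acts as the Fourier multiplier $e^{-itg_0(\x,\x)/h}$ between $J_{S_\pm}(b_\pm)^*$ and $J_{S_\pm}(a_\pm)$, the kernel of the composition $J_{S_\pm}(a_\pm)e^{-ithP_0}J_{S_\pm}(b_\pm)^*$ is the \emph{single} oscillatory integral
\[
(2\pi h)^{-n}\int e^{\frac{i}{h}\Phi_\pm}a_\pm(x,\x)\overline{b_\pm(y,\x)}\,d\x,\qquad \Phi_\pm=S_\pm(x,\x)-S_\pm(y,\x)-tg_0(\x,\x).
\]
Stationary phase in $\x$ applied to $\Phi_\pm/t$ (Hessian $-2I_k+O(R^{-\m})$ when $|(x-y)/t|$ is bounded; non-stationary phase otherwise) gives $|K|\leq C(|t|h)^{-n/2}$ directly, without any separate $L^1$/$L^\infty$ control of the FIOs.

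A few smaller corrections: the covering $\Gamma^+\cup\Gamma^-\supset\{|x|>R,\,|\x|^2\in J\}$ holds for any $\s>0$, and the IK construction in fact requires $\s$ \emph{small} (Theorem~\ref{eikonal}, Lemma~\ref{cltj}), not close to $1$; the parametrix and the resulting kernel bound are valid on the entire interval $0\leq\pm t\leq T/h$, so no iteration over $O(T)$ subintervals is needed; Keel--Tao is applied to each $\chi_{k,\pm}(x,hD)^*e^{itP}$ separately, so cross-terms never enter (and your $O(h^\infty)$ claim for them would not be obvious anyway, since $\Gamma^+\cap\Gamma^-\neq\emptyset$ and the flow from $\Gamma^-$ for positive times need not remain in $\Gamma^-$); and the retarded inhomogeneous bound is obtained from the homogeneous one by duality followed by the Christ--Kiselev lemma (Lemma~\ref{CK}), which is the origin of the restriction $(p,p')\neq(2,2)$.
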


\begin{proof}
For any $N\in \mathbb{N}$, we can write 
\[
\f(Q)(1-\chi) = \sum_{k=0}^N h^k \chi_{k}(x,hD)^{*} + h^{N+1}\tilde{R}_{N}(h)
\]
where $\chi \in C^{\infty}(\re^{2n})$ with $\supp \chi_k \subset \{(x,\x)\in \re^{2n} | |x|\geq R, |\x|^2 \in J\}$ and $\tilde{R}_{N} \in \Psi^{0, -\infty} $. Take $0<\s <1$ satisfying Theorem \ref{eikonal} and $\r_+, \r_- \in C^{\infty}(\re)$ with $\r_+ + \r = 1$, $\r_+(s) = 0$ on $s\leq -\s$ and $\r_-(s) = 0$ on $s\geq \s$. Set $\chi_{k,+}(x,\x) = \chi_k(x,\x)\r_+(g_0(x,\x))$ and $\chi_{k,-}(x,\x) = \chi_k(x,\x)\r_-(g_0(x,\x))$.

Notice that 
\[
\|h^{N+1}\tilde{R}_{N}(h)e^{itP}u_0\|_{L^p([0,1],L^q)} \leq C\|u_0\|_{L^2}
\]
by the Sobolev embedding. By the Keel-Tao theorem (\cite{KT}), it suffices to show that
\[
\|\chi_{k}^{\pm}(x,hD)^*e^{i\frac{t-s}{h}h^2P}\chi_k^{\pm}(x,hD)\|_{L^{\infty}} \leq \frac{C}{(h|t-s|)^{n/2}}\|f\|_{L^1},
\]
for $-T/h\leq t,s \leq T/h$. Notice that
 \begin{align*}
 (\chi_{k}^{\pm}(x,hD)^*e^{i\frac{t-s}{h}h^2P}\chi_k^{\pm}(x,hD))^* = \chi_{k}^{\pm}(x,hD)^*e^{i\frac{s-t}{h}h^2P}\chi_k^{\pm}(x,hD)
\end{align*}
and $\chi_k^{\pm}(x,hD)^*$ is $L^{\infty}$ bounded uniformly in $h$, then it suffices to prove
\[
|K_{\pm}(t-s,x,y,h)| \leq \frac{C}{(h|t-s|)^{n/2}}
\]
for $0<h\leq 1$, $x,y \in \re^n$ and $-T/h\leq s<t \leq T/h$ where $K_{\pm}(t,x,y,h)$ denotes the distribution kernel for $e^{ithP}\chi_{k}^{\pm}(x,hD)$. 

\begin{lem}
There exist $R>0$ and $C>0$, such that
\[
|K_{\pm}(t,x,y,h)| \leq \frac{C}{(\pm t h)^{n/2} }
\]
for $0<h\leq 1$, $0\leq \pm t \leq T/h$ and $x,y \in \re^n$.
\end{lem}
\begin{proof}
Take $a_{\pm}$ and $b_{\pm}$ as Proposition \ref{pr:parx} for $\chi_k^{\pm}$. Then, it suffice to prove that
\[
\frac{1}{(2\pi h)^n}| \int e^{\frac{i}{h}\Phi_{\pm}(t,x,y,\x,R)}a_{\pm}(x,\x)\overline{b_{\pm}(y,\x)} d\x | \leq \frac{C}{(\pm th)^{n/2}}
\]
where $\Phi_{\pm}(t,x,y,\x,R) = S_{\pm}(x,\x)-S_{\pm}(y,\x)-t g_0(\x,\x)$. If $\pm t\leq h$, then the claim is trivial. Thus, we may assume $\pm t\geq h$. By the theorem 3.1, 
\begin{align*}
\frac{\Phi_{\pm}(t,x,y,\x,R)}{t} =& \frac{S_{\pm}(x,\x)-S_{\pm}(y,\x)}{t} -g_0(\x,\x)\\
=& \int_0^1(x-y)\cdot \nabla_{x}S_{\pm}(y+r(x-y),\x)\frac{dr}{t} - g_0(\x,\x)\\
=& \frac{x-y}{t} -g_0(\x,\x) + O(R^{-\m})\frac{x-y}{t}.
\end{align*}
There exist $R_0>0$ and $C_0>0$ such that if $R\geq R_0$ and $|\frac{x-y}{t}| \geq C_0$, we have
\begin{align}
|\frac{\nabla_{\x}\Phi_{\pm}(t,x,y,\x,R)}{t}| \geq& C|\frac{x-y}{t}| \nonumber\\
\label{sta}
|\frac{\pa_{\x}^{\a}\Phi_{\pm}(t,x,y,h,R)}{t}| \leq& C|\frac{x-y}{t}|,\quad |\a| \geq 1
\end{align}
for $|\x|^2 \in J$. Then, non-stationary methods can be applied. 

Thus, we can suppose $|(x-y)/t| \leq C_0$ and $\pm t\geq h$. In this case, we can write
\begin{align*}
\frac{\nabla^2_{\x}\Phi_{\pm}(t,x,y,\x,R)}{t}=-2I_k+O(R^{-\m}).
\end{align*}
Since $(\ref{sta})$ and $|(x-y)/t| \leq C_0$, we can apply the stationary method and get the desired result.
\end{proof}

For inhomogeneous case with $(p,p')\neq (2,2)$, we need the Christ Kiselev lemma.

\begin{lem}[Christ Kislev lemma]\label{CK}
Let $T>0$, $X_1, X_2$ be Banach spaces and $1\leq p <q \leq \infty$. Suppose $K(t,s) \in L^1_{loc}(\re^2, B(X_1,X_2))$ satisfies
\[
\|A\|_{L^p([-T,T], X_1)\to L^q([-T,T], X_2)}  < \infty
\]
where $Af(t) = \int_0^TK(t,s)f(s)ds$. Then, $\tilde{A}f(t)= \int_0^t K(t,s)f(s)ds$ satisfies
\[
\|\tilde{A}\|_{L^p([-T,T], X_1)\to L^q([-T,T], X_2)}  < \infty.
\]
\end{lem}
We refer to \cite{CK} for a proof of this lemma.

Set $v(t) =\int_{-T}^T \f(Q)(1-\chi)e^{i(t-s)P}(1-\chi)\f(Q)f(s)ds$. Then, by the first result, 
\[
\|v\|_{L^pL^q} \lesssim \|\int_{-T}^T e^{-isP}(1-\chi)\f(Q)f(s)ds\|_{L^2}
\]
holds. By the dual of the first result, we conclude 
\[
\|\int_{-T}^T e^{-isP}(1-\chi)\f(Q)f(s)ds\|_{L^2} \lesssim \|f\|_{L^{\frac{p'}{p'-1}}L^{\frac{q'}{q'-1}}}. 
\]
As a result of applying Lemma \ref{CK} with $p \neq \frac{p'}{p'-1}$, we conclude the second result.

\end{proof}

\subsection{Full Strichartz estimates outside a compact set}

\begin{thm}\label{full outside}
There exists $R>0$ such that for any $T>0$, admissible pair $(p,q)$, and $\chi \in C_c^{\infty}(\re^n)$ with $\chi =1$ on $|x| <R$, there exists $C>0$ such that
\[
\|(1-\chi)e^{itP}u_0\|_{L^p([-T,T], L^q(\re^n))} \leq C\|u_0\|_{L^2(\re^n)}
\]
for $u_0 \in L^2(\re^n)$.
\end{thm}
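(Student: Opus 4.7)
The plan is to deduce the full estimate from the frequency-localized estimate of Theorem \ref{outside} by means of the Littlewood--Paley inequality of Corollary \ref{PLsum}, absorbing the mismatch between output and input frequency localizations into a commutator error controlled by (\ref{eq:commPL})--(\ref{eq:commPR}).

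Applying Corollary \ref{PLsum} to $u=(1-\chi)e^{itP}u_0$ gives
\[
\|(1-\chi)e^{itP}u_0\|_{L^p L^q}
\leq C\|(1-\chi)e^{itP}u_0\|_{L^\infty L^2}
+ C\Bigl(\sum_{j\geq 0}\|\f(Q(h_j))(1-\chi)e^{itP}u_0\|_{L^p L^q}^{2}\Bigr)^{1/2},
\]
with $h_j=2^{-j}$. The first term is bounded by $\|u_0\|_{L^2}$ using unitarity of $e^{itP}$ on $L^2$, which is guaranteed by essential self-adjointness of $P$ under Assumptions \ref{assA} and \ref{assC}. For the sum, fix $h=h_j$ and pick $\tilde\f\in C_c^\infty(\re\setminus 0)$ with $\tilde\f\equiv 1$ on $\supp\f$, and decompose
\[
\f(Q(h))(1-\chi)e^{itP}u_0 = A_h + B_h,
\]
with $A_h := \f(Q(h))(1-\chi)e^{itP}\tilde\f(Q(h))u_0$ and $B_h := \f(Q(h))(1-\chi)e^{itP}(1-\tilde\f(Q(h)))u_0$. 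Theorem \ref{outside} applied with initial datum $\tilde\f(Q(h))u_0$ gives $\|A_h\|_{L^p L^q}\leq C\|\tilde\f(Q(h))u_0\|_{L^2}$, and almost-orthogonality of the semiclassical spectral projectors $\tilde\f(Q(h_j))$ on $L^2$ (obtained by choosing $\tilde\f$ so that the shifts $\tilde\f(\cdot/2^j)^2$ have bounded overlap) yields $\sum_j\|A_{h_j}\|_{L^p L^q}^{2}\leq C\|u_0\|_{L^2}^{2}$, as required.

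For the error $B_h$, pick an intermediate $\eta\in C_c^\infty(\re\setminus 0)$ with $\eta\equiv 1$ on $\supp\f$ and $\supp\eta\subset\{\tilde\f=1\}$; then $\f(Q(h))\eta(Q(h))=\f(Q(h))$ and $\eta(Q(h))(1-\tilde\f(Q(h)))=0$ by functional calculus, so inserting $\eta(Q(h))$ between $\f(Q(h))$ and $1-\chi$ and commuting it through produces
\[
B_h = \f(Q(h))[\eta(Q(h)),1-\chi]\,e^{itP}(1-\tilde\f(Q(h)))u_0
 + \f(Q(h))(1-\chi)\,[\eta(Q(h)),e^{itP}]\,(1-\tilde\f(Q(h)))u_0.
\]
Both commutators are $O(h)$ on $L^2\to L^2$ by (\ref{eq:commPL})--(\ref{eq:commPR}); Assumption \ref{assC} ($\{p,q\}=0$) is essential here since it is what kills the leading Weyl-calculus term. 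Combined with the semiclassical Bernstein inequality $\|\f(Q(h))v\|_{L^q}\leq C h^{-n(1/2-1/q)}\|v\|_{L^2}= C h^{-2/p}\|v\|_{L^2}$, we obtain $\|B_h\|_{L^p L^q}\leq CT^{1/p} h^{1-2/p}\|u_0\|_{L^2}$, which is square-summable in $j$ whenever $p>2$.

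The delicate case is the endpoint $(p,q)=(2,\tfrac{2n}{n-2})$ for $n\geq 3$, where the above gain collapses to $h^{0}$. To handle it I would Duhamel-expand $[\eta(Q(h)),e^{itP}]=i\int_{0}^{t}e^{i(t-s)P}[P,\eta(Q(h))]e^{isP}\,ds$, insert further cutoffs so that the result lies in the form treated by the inhomogeneous half of Theorem \ref{outside}, and apply that estimate with a non-endpoint admissible pair such as $(p_{1},q_{1})=(\infty,2)$ (permitted since $(p,p_{1})=(2,\infty)\neq(2,2)$); the $O(h)$ smallness of $[P,\eta(Q(h))]$ again coming from $\{p,q\}=0$ then converts into square-summable decay in $j$. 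This endpoint bookkeeping is the principal obstacle I anticipate; the rest of the argument is a routine combination of Theorem \ref{outside}, the commutator lemma, and semiclassical Littlewood--Paley.
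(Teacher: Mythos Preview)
Your overall strategy (Littlewood--Paley via Corollary~\ref{PLsum}, then bridge the outer $\f(Q)$ to an inner frequency localization using the $O(h)$ commutator estimates) matches the paper's, but the implementation differs in a way that manufactures an endpoint obstruction.

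Rather than placing a cutoff on $u_0$ and treating the mismatch as a Bernstein-penalized error, the paper sets $v(t)=(1-\chi)\g(Q)e^{itP}u_0$ (with $\g=1$ on $\supp\f$) and Duhamel-expands $\g(Q)$ through the propagator:
\[
v(t)=(1-\chi)e^{itP}\g(Q)u_0-i(1-\chi)\int_0^t e^{i(t-s)P}[P,\g(Q)]e^{isP}u_0\,ds.
\]
For the integral term one applies the \emph{homogeneous} estimate of Theorem~\ref{outside} to the $L^2$ data $\int_{-T}^{T}e^{-isP}[P,\g(Q)]e^{isP}u_0\,ds$, then the Christ--Kiselev lemma (valid since $p\ge2>1$) to restore $\int_0^t$. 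With $\|[P,\g(Q)]\|_{L^2\to L^2}\le Ch$ from (\ref{eq:commPL}) this yields $\|\f(Q)v\|_{L^pL^q}\lesssim\|\g(Q)u_0\|_{L^2}+h\|u_0\|_{L^2}$ \emph{uniformly} in $p\in[2,\infty]$, so no separate endpoint treatment is needed. The residual $\f(Q)(1-\chi)e^{itP}u_0-\f(Q)v=\f(Q)(1-\chi)(1-\g(Q))e^{itP}u_0$ is harmless since $\f(1-\g)=0$.

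Your Bernstein route loses $h^{-2/p}$, offset by the $O(h)$ commutator only when $p>2$. Your endpoint patch is morally the paper's Duhamel argument, delayed, but two points need care. First, the inhomogeneous estimate of Theorem~\ref{outside} as stated carries $(1-\chi)\f(Q)$ on \emph{both} sides of the propagator and does not directly accept $[P,\eta(Q)]e^{isP}(1-\tilde\f(Q))u_0$ as a forcing term; what actually works (and what its proof really gives) is the homogeneous bound plus Christ--Kiselev, exactly as above. Second, you left the other commutator term $\f(Q)[\eta(Q),\chi]e^{itP}(\cdots)$ unaddressed at $p=2$; there is no propagator to Duhamel there, so you must instead observe that $\f(Q)[\eta(Q),\chi]=O(h^2)$ (the principal symbol of $[\eta(Q),\chi]$ is supported where $\eta'\ne0$, disjoint from $\supp\f$), not merely $O(h)$.
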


\begin{proof}
By Corollary \ref{PLsum} , 
\[
\|(1-\chi)e^{itP}u_0\|_{L^pL^q} \lesssim \|u_0\|_{L^2} + (\sum_{h:dyadic}\|\f(Q)(1-\chi)e^{itP}u_0\|_{L^pL^q})
\] 
holds. Then, it suffices to estimates $\|\f(Q)(1-\chi)e^{itP}u_0\|_{L^pL^q}$.
Take $\g\in C_c^{\infty}(\re^n)$ with $\g=1$ on $\supp \f$ and set $v(t) = (1-\chi)\g(Q)e^{itP}u_0$.  We note $v(t)$ satisfies
\begin{align*}
v(t) = (1-\chi)e^{itP}\g(Q)u_0 -i(1-\chi)\int_0^t e^{i(t-s)P}[P, \g(Q)]e^{isP}u_0ds.
\end{align*}
By Theorem \ref{outside},  we compute
\begin{align*}
\|&\f(Q)(1-\chi)\int_0^T e^{i(t-s)P}[P, \g(Q)]e^{isP}u_0ds\|_{L^pL^q}\\
&\lesssim \int_0^T \|e^{-isP}[P, \g(Q)]e^{isP}u_0\|_{L^2}ds\\
&= \|e^{-isP}[P, \g(Q)]e^{isP}u_0\|_{L^1L^2}.
\end{align*}
By this computation and the Christ-Kislev lemma (remark that $p\geq 2>1$),  we have
\begin{align*}
\|\f(Q)v\|_{L^pL^q}&\lesssim \|\g(Q)u_0\|_{L^2} + \|e^{-isP}[P, \g(Q)]e^{isP}u_0\|_{L^1L^2}\\
&\lesssim  \|\g(Q)u_0\|_{L^2}+h\|u_0\|_{L^2}
\end{align*}
Now we notice
\[
\f(Q)(1-\chi)e^{itP}u_0= \f(Q)v(t)+ O(h^{\infty})e^{itP}u_0,
\]
and we conclude
\[
\|\f(Q)(1-\chi)e^{itP}u_0\|_{L^pL^q} \lesssim \|\g(Q)u_0\|_{L^2} + h\|u_0\|_{L^2}.
\]
\end{proof}

%%%%%%%%%%%%%%%%%%%%%%%%%%%%%%%%%%%%%%%%%%%%%%%%%%%%%%%%%%%%%%%%%%%%compact region Strichartz%%%%%%%%%%%%%%%%%%%%%%%%%%%%%%%%%%%%%%%%%%%%%%%%%%%%%%%%%%%%%%%%%%%%%%%%%%%%%%%%%%%%%%%%%%%%%%%%%%%%%%%%%%%%%%%%%%%%%%%%%%%%%%%%%%%

\section{Estimates for relatively compact region}
In this section, we prove the Strrichartz estimates on any compact set by using the local smoothing effects. 
 Due to the non-commutativity  between $\f(Q)$ and $e^{itP}$, we need more delicate calculations than the elliptic cases. By the time reversibility, we may prove the Strichartz estimates when we replace the time interval $[-T,T]$ with $[0,1].$
\begin{prop}\label{WKB}
Let $a(x,\x,h) \in S^0$ with a decomposition $a_0+a_1$, where $a_0(x,\x,h)\in S^0$ with 
\[
\supp a_0(\cdot,\cdot,h) \subset \re^n \times B_R(0)
\]
for some $h$-independent constant $R>0$ and $a_1(x,\x,h) \in h^mS^{-l}$ for some $m, l >n/2$.
Then, there exist $\a>0$ for any $N\in \mathbb{N}$, there exist operators $J_N(t,h)$, $R_N(t,h)$ and a constant $C_N>0$ for any $h\in (0,1]$,  we can write $e^{ithP}a(x,hD,h) = J_N(t,h) + R_{N,1}(t,h) + R_2(t,h)$ for $t\in [-\a,\a]$ and 
\begin{align*}
\|J_N(t,h)\|_{L^1(\re^n)\to L^{\infty}(\re^n)} \leq& \frac{C_N}{(|t|h)^{\frac{n}{2}}},\\
\|R_{N,1}(t,h)\|_{L^2(\re^n)\to L^2(\re^n)} \leq& C_Nh^{N+1}\\
\|b(x,hD,h)R_2(t,h)\|_{L^1(\re^n)\to L^{\infty}(\re^n)} \leq& C_Nh^{m-n}
\end{align*}
for $b\in S^l$ with $l< -n$.
\end{prop}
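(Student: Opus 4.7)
\medskip

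\noindent\textbf{Proof proposal.} The plan is a standard semiclassical WKB construction for the $a_0$ piece (compactly supported in frequency) plus a direct operator-theoretic estimate for the $a_1$ piece (already small in $h$ thanks to $a_1\in h^m S^{-l}$). Write $a=a_0+a_1$. I would set
\[
J_N(t,h)u(x)=\frac{1}{(2\pi h)^n}\int e^{\frac{i}{h}\phi(t,x,\xi)}\sum_{j=0}^N h^j c_j(t,x,\xi)\,\hat u(\xi/h)\,d\xi
\]
where $\phi$ solves the Hamilton--Jacobi equation $\partial_t\phi+p(x,\partial_x\phi)=0$ with initial datum $\phi(0,x,\xi)=x\cdot\xi$, and $c_j$ are determined by the usual transport equations with $c_0(0,x,\xi)=a_0(x,\xi,h)$ and $c_j(0,\cdot)=0$ for $j\ge 1$. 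Because $a_0$ is compactly supported in $\xi$, there is an $\alpha>0$ (independent of $h$) such that $\phi$ and all $c_j$ are smooth on $|t|\le\alpha$, with $c_j$ supported in $\{|\xi|\le R\}$. Setting $R_{N,1}(t,h)=e^{ithP}a_0(x,hD,h)-J_N(t,h)$, Duhamel's formula combined with Lemma \ref{energy estimates} and the fact that the WKB defect symbol lies in $h^{N+1}S^{-\infty}$ yields $\|R_{N,1}(t,h)\|_{L^2\to L^2}\le C_N h^{N+1}$ for $|t|\le\alpha$.

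For the dispersive estimate on $J_N$, the kernel of $J_N(t,h)$ is
\[
K_N(t,x,y,h)=\frac{1}{(2\pi h)^n}\int e^{\frac{i}{h}(\phi(t,x,\xi)-y\cdot\xi)}\sum_{j=0}^N h^j c_j(t,x,\xi)\,d\xi.
\]
Since $\phi(t,x,\xi)=x\cdot\xi-tp(x,\xi)+O(t^2)$ on the support of the amplitudes, one has
\[
\partial_\xi^2\phi(t,x,\xi)=-2tg(x)+O(t^2),
\]
which is non-degenerate of size $|t|$ for $|t|\le\alpha$ small, irrespective of the signature of $g$. Stationary phase (in the non-degenerate quadratic-in-$\xi$ variable, after a harmless rescaling $\xi=\sqrt{|t|/h}\,\eta$) then gives $|K_N(t,x,y,h)|\le C_N(|t|h)^{-n/2}$, which is the claimed $L^1\to L^\infty$ bound on $J_N(t,h)$.

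For the $a_1$-contribution, set $R_2(t,h)=e^{ithP}a_1(x,hD,h)$. Then
\[
\|b(x,hD,h)R_2(t,h)\|_{L^1\to L^\infty}\le\|b(x,hD,h)\|_{L^2\to L^\infty}\,\|e^{ithP}\|_{L^2\to L^2}\,\|a_1(x,hD,h)\|_{L^1\to L^2}.
\]
The middle factor is $O(1)$ by Lemma \ref{energy estimates}. For $b\in S^l$ with $l<-n$, integration by parts in the oscillatory-integral representation of its kernel gives $|K_b(x,y)|\le C_N h^{-n}(1+|x-y|/h)^{-N}$, so $\|b(x,hD,h)\|_{L^2\to L^\infty}\lesssim h^{-n/2}$ by Schur. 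Because $a_1\in h^m S^{-l}$ with $l>n/2$, the same argument applied to $a_1(x,hD,h)^*$ yields $\|a_1(x,hD,h)\|_{L^1\to L^2}\lesssim h^{m-n/2}$. Multiplying gives the required $h^{m-n}$ bound on $b(x,hD,h)R_2(t,h)$.

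I expect the main obstacle to be verifying solvability of the Hamilton--Jacobi and transport system on a uniform time interval $|t|\le\alpha$, together with the non-degeneracy of $\partial_\xi^2\phi$ down to the size $|t|$ required for the sharp dispersive constant $(|t|h)^{-n/2}$; here the non-ellipticity of $p$ forces one to work with the indefinite but invertible quadratic form $g(x)$ rather than a positive-definite one, and one must confirm that the stationary phase argument goes through with $g$ of general signature (it does, since only the non-degeneracy and not the signature of the Hessian enters the bound on the oscillatory integral).
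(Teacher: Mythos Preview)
Your proposal is correct and follows essentially the same approach as the paper: you set $R_2(t,h)=e^{ithP}a_1(x,hD,h)$ and bound $bR_2$ by factoring through $L^2$ exactly as the paper does, and you construct $J_N$ via the standard WKB parametrix with Hamilton--Jacobi phase and transport equations, which is precisely what the paper invokes by deferring to \cite{S}. The only cosmetic difference is that the paper appeals to the self-adjointness of $P$ (so $\|e^{ithP}\|_{L^2\to L^2}=1$) rather than Lemma~\ref{energy estimates} for the middle factor in the $R_2$ estimate, and leaves the WKB details to the reference whereas you spell them out.
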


\begin{proof}
Set $R_2(t,h) = e^{ithP}a_1(x,hD)$. By virtue of the $L^2\to L^{\infty}$ estimates for pseudodifferential operators and the self-adjointness of $P$, we have
\begin{align*}
\|b(x,hD)R_2(t,h)\|_{L^1(\re^n)\to L^{\infty}(\re^n)} &\leq Ch^{-\frac{n}{2}} \|R_2(t,h)\|_{L^1(\re^n)\to L^2(\re^n)} \\
&\leq Ch^{m-n}.
\end{align*}
Therefore, it is enough to find $J_N$ and $R_N$ as this statement of the proposition satisfying
\[
e^{ithP}a_0(x,hD,h) = J_N(t,h) + R_N(t,h).
\]
The proof is the standard WKB approximation argument and given in \cite{S}.
\end{proof}

\begin{cor}\label{WKBcor}
Let $\f,\g \in C_c^{\infty}(\re)$ with $\g=1$ on $\supp \f$. Then, there exist $\a>0$ and $C>0$ such that for every $J \subset \re$ with $|J|\leq \a h$ and $h\in (0,1]$, admissible pair $(p,q)$, if $u$ saitsfies
\[
i\pa_t u + Pu = 0,\quad u|_{t=0} =u_0, \quad u_0\in L^2(\re^n)
\] 
then
\[
\|\g(Q)u\|_{L^p(J,L^q(\re^n))} \leq C\|u_0\|_{L^2(\re^n)}.
\]
Moreover, if $u$ satisfies
\[
i\pa_tu + Pu = \f(Q)f, \quad u|_{t=0} = 0
\]
then
\[
\|\g(Q)u\|_{L^p(J,L^q(\re^n))} \leq C\|\f(Q)f\|_{L^{p_1}(J,L^{q_1}(\re^n))}
\]
holds.
\end{cor}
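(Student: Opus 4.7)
The strategy is to apply the abstract Strichartz theorem of Keel--Tao to the operator $U(t) := \g(Q)\,e^{itP}$ on the short window $t \in J$ with $|J|\leq \a h$, and read off both inequalities from its homogeneous and retarded conclusions. The two hypotheses to verify are the uniform $L^2$ bound $\|U(t)\|_{L^2\to L^2} \leq C$, immediate from functional calculus and unitarity of $e^{itP}$, and the dispersive bound $\|U(t)U(s)^*\|_{L^1\to L^\infty} \leq C|t-s|^{-n/2}$ for $t,s \in J$.

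The dispersive bound is where Proposition \ref{WKB} enters. Since $\g$ is real-valued and $Q$ self-adjoint,
\[
U(t)U(s)^* = \g(Q)\,e^{i(t-s)P}\,\g(Q).
\]
Set $\tau = (t-s)/h$, so $|\tau| \leq \a$. The functional calculus from Section 3 gives $\g(Q) = a(x,hD,h)$ modulo $O(h^\infty)$ in $\Psi_h^{0,-\infty}$, with principal symbol $\g\circ q$ compactly supported in $\x$; this fits the decomposition $a = a_0 + a_1$ required by Proposition \ref{WKB}. Applying that proposition,
\[
e^{i\tau h P}\g(Q) = J_N(\tau,h) + R_{N,1}(\tau,h) + R_2(\tau,h),
\]
and $U(t)U(s)^* = \g(Q) J_N + \g(Q) R_{N,1} + \g(Q) R_2$. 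The leading piece satisfies $\|\g(Q) J_N\|_{L^1\to L^\infty} \leq C(|\tau|h)^{-n/2} = C|t-s|^{-n/2}$ since $\g(Q)$ is $L^\infty$-bounded (its Schwartz kernel is uniformly integrable off-diagonal). For $R_{N,1}$, I would insert a nested cutoff $\tilde\g(Q)$ with $\tilde\g \equiv 1$ on $\supp\g$ (using that the WKB residual is microlocalized where the original symbol lives, so $R_{N,1}\tilde\g(Q) = R_{N,1} + O(h^\infty)$) and apply the semiclassical Bernstein-type bounds $\|\g(Q)\|_{L^2\to L^\infty}, \|\tilde\g(Q)\|_{L^1\to L^2} \leq C h^{-n/2}$ with $\|R_{N,1}\|_{L^2\to L^2}\leq C h^{N+1}$ to get $O(h^{N+1-n})$; since $|t-s|^{-n/2} \geq (\a h)^{-n/2}$, taking $N$ large absorbs this into the main bound. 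Finally, $\g(Q) R_2$ is controlled by the third clause of Proposition \ref{WKB} with $b$ the symbol of $\g(Q) \in S^l$ for every $l<-n$, giving $O(h^{m-n})$ with $m$ arbitrarily large.

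With both hypotheses in place, the homogeneous Keel--Tao conclusion delivers $\|\g(Q) e^{itP} u_0\|_{L^p(J;L^q)} \leq C\|u_0\|_{L^2}$, which is the first inequality. For the inhomogeneous estimate, Duhamel's formula gives $\g(Q) u(t) = -i\int_0^t \g(Q)e^{i(t-s)P}\f(Q) f(s)\,ds$, and the functional calculus identity $\g(Q)\f(Q) = \f(Q)$ (from $\g \equiv 1$ on $\supp\f$) rewrites this as $-i\int_0^t U(t)U(s)^* \f(Q) f(s)\,ds$. The retarded Keel--Tao estimate then yields $\|\g(Q)u\|_{L^p(J;L^q)}\leq C\|\f(Q)f\|_{L^{p_1'}(J;L^{q_1'})}$ for any admissible pair $(p_1,q_1)$, which is the stated inequality with the indices on the right interpreted as Strichartz duals.

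I expect the main obstacle to be the dispersive bound (D), in particular converting the $L^2\to L^2$ smallness of $R_{N,1}$ into a kernel bound on $L^1\to L^\infty$. The resolution relies crucially on the narrow time window $|J|\leq \a h$: on this scale $|t-s|^{-n/2}$ is no smaller than a fixed negative power of $h$, so any polynomial-in-$h$ gain from the WKB residuals can be traded against it by taking $N$ (and the decay index $m$ for $R_2$) sufficiently large.
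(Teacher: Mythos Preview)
Your proof is correct and follows exactly the route the paper has in mind: represent $\g(Q)$ as a semiclassical pseudodifferential operator satisfying the hypotheses of Proposition~\ref{WKB}, use the resulting parametrix to establish the $L^1\to L^\infty$ dispersive bound for $\g(Q)e^{i(t-s)P}\g(Q)$ on the window $|t-s|\leq \a h$, and feed this together with the trivial $L^2$ bound into Keel--Tao. The paper compresses all of this into two sentences; you have supplied the details it omits, including the treatment of the remainders $R_{N,1}$ and $R_2$ and the observation that on a window of length $\a h$ any polynomial gain in $h$ dominates $|t-s|^{-n/2}$.

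One small clarification on your handling of $R_{N,1}$: rather than asserting that $R_{N,1}$ itself is microlocalized, it is cleaner to note that $J_N+R_{N,1}=e^{i\tau hP}a_0(x,hD,h)$, so that $\g(Q)(J_N+R_{N,1})(1-\tilde\g(Q))=\g(Q)e^{i\tau hP}\bigl(a_0(x,hD,h)(1-\tilde\g(Q))\bigr)$ is $O(h^\infty)$ in $L^1\to L^\infty$ directly from the symbolic calculus, and then bound $\g(Q)J_N\tilde\g(Q)$ and $\g(Q)R_{N,1}\tilde\g(Q)$ separately as you do. This avoids needing a separate microlocalization statement for $J_N$.
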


\begin{proof}
We can write $\f(Q) = a(x,hD,h)$, then $a$ satisfies the condition for Proposition \ref{WKB}. Applying the Keel-Tao theorem, we get the desired estimates.

\end{proof}

\begin{prop}
Let $\f,\g \in C_c^{\infty}(\re)$ and $\chi \in C_c^{\infty}(\re^n)$. Then for any $N\in \mathbb{N}_{\geq 0}$ there exists $C>0$ such that $v= \f(Q)\chi e^{itP}u_0$ satisfies 
\begin{align*}
\|v\|_{L^p([0,1],L^q(\re^n))} \leq& C\|v\|_{L^{\infty}([0,1],L^2(\re^n))} +Ch^{-\frac{1}{2}}\|v\|_{L^2([0,1],L^2(\re^n))}\\
&+ Ch^{\frac{1}{2}}\|[P,\f(Q)]\chi e^{itP}u_0 + \f(Q)[P,\chi]e^{itP}u_0 \|_{L^2([0,1],L^2(\re^n))}\\
&+ h^{N} \|u_0\|_{L^2}
\end{align*}
for $h\in (0,1]$, $u_0\in L^2(\re^n)$.
\end{prop}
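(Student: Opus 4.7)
The plan is to partition $[0,1]$ into $K\lesssim h^{-1}$ short intervals of length $\a h$, where $\a$ is the constant furnished by Corollary~\ref{WKBcor}, to apply the semiclassical Strichartz estimate on each piece via Duhamel's formula, and to patch the pieces together by elementary $\ell^p$ inequalities.

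I first choose auxiliary cutoffs $\tilde{\f},\tilde{\g}\in C_c^{\infty}(\re)$ with $\tilde{\f}\equiv 1$ on $\supp\f$ and $\tilde{\g}\equiv 1$ on $\supp\tilde{\f}$, so that by the functional calculus $\tilde{\g}(Q)v=\tilde{\g}(Q)\tilde{\f}(Q)\f(Q)\chi e^{itP}u_0=v$ exactly, and Corollary~\ref{WKBcor} becomes available with the pair $(\tilde{\f},\tilde{\g})$. A direct computation gives $(i\pa_t+P)v=F$ for the forcing $F$ in the statement. Since $[P,\f(Q)]\in h\Psi^{0,-\infty}$ and $\f(Q)[P,\chi]\in\Psi^{0,-\infty}$ (the latter bounded on $L^2$ because $\f(Q)$ is smoothing enough to absorb the first order loss from $[P,\chi]$), one has $\|F(r)\|_{L^2}\leq C\|u_0\|_{L^2}$ uniformly in $r$, which will be enough to apply Strichartz pointwise in $r$.

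Partition $[0,1]$ into intervals $I_k=[t_k,t_{k+1}]$ of length at most $\a h$, $k=0,\ldots,K-1$. Take $s_0=0$ and, for $k\geq 1$, pick $s_k\in I_k$ via a pigeonhole/Chebyshev argument (using continuity of $t\mapsto\|v(t)\|_{L^2}$ and $\inf_{I_k}\|v\|_{L^2}^2\leq |I_k|^{-1}\int_{I_k}\|v\|_{L^2}^2\,dt$) so that $\|v(s_k)\|_{L^2}\leq (\a h)^{-1/2}\|v\|_{L^2(I_k,L^2)}$. By Duhamel on $I_k$,
\[
v(t)=e^{i(t-s_k)P}v(s_k)-i\int_{s_k}^{t}e^{i(t-r)P}F(r)\,dr=:w_k(t)+W_k(t).
\]
For the homogeneous piece, Corollary~\ref{WKBcor} (first assertion, after a time shift) gives $\|\tilde{\g}(Q)w_k\|_{L^p(I_k,L^q)}\leq C\|v(s_k)\|_{L^2}$. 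For the inhomogeneous piece, Minkowski in $r$ combined with the same homogeneous estimate applied to the Schr\"odinger evolution $t\mapsto e^{i(t-r)P}F(r)$ (which, after time-shifting, starts from $F(r)\in L^2$ over an interval of length at most $\a h$) and Cauchy--Schwarz in $r$ yield
\[
\|\tilde{\g}(Q)W_k\|_{L^p(I_k,L^q)}\leq\int_{I_k}\|\tilde{\g}(Q)e^{i(\cdot-r)P}F(r)\|_{L^p(I_k,L^q)}\,dr\leq C(\a h)^{1/2}\|F\|_{L^2(I_k,L^2)}.
\]

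Finally, using $\|v\|_{L^p([0,1],L^q)}^p=\sum_k\|\tilde{\g}(Q)v\|_{L^p(I_k,L^q)}^p$ together with $(a+b)^p\leq 2^{p-1}(a^p+b^p)$ and the elementary inequality $\sum_k a_k^{p/2}\leq(\sum_k a_k)^{p/2}$ (valid for $a_k\geq 0$ and $p\geq 2$), the $w_k$ contributions sum to at most $C\|v\|_{L^{\infty}L^2}+Ch^{-1/2}\|v\|_{L^2L^2}$ (the first term from the $k=0$ interval, the second from the pigeonhole choice) and the $W_k$ contributions sum to at most $Ch^{1/2}\|F\|_{L^2L^2}$; taking $p$-th roots gives the claimed bound. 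The $h^N\|u_0\|_{L^2}$ term absorbs the $R_{N,1}$-type error of size $h^{N+1}$ coming from Proposition~\ref{WKB} in Corollary~\ref{WKBcor}, accumulated over the $K\sim h^{-1}$ subintervals and made negligible by choosing $N$ large.

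The main obstacle is the inhomogeneous estimate. A direct application of Corollary~\ref{WKBcor}'s second assertion would require writing the forcing as $\tilde{\f}(Q)\tilde{f}$ and would yield a bound in a dual admissible Strichartz norm in space, which cannot be converted to $\|F\|_{L^2L^2}$ without extra spatial regularity or localization of $F$. Routing the estimate through Minkowski and the homogeneous Strichartz estimate applied to $F(r)$ as fresh initial data sidesteps this, and the $h^{1/2}$ gain arises purely from Cauchy--Schwarz on the short interval of length $\a h$.
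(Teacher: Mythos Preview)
Your argument is correct and shares the paper's overall strategy—partition $[0,1]$ into $O(h^{-1})$ subintervals of length $\a h$, apply the short-time semiclassical Strichartz bound of Corollary~\ref{WKBcor} on each, and sum via $\ell^2\hookrightarrow\ell^p$—but the implementation on each subinterval is genuinely different. The paper localizes with smooth time cutoffs $\y_j$ (whose derivatives supply the factor $h^{-1}$, converted to $h^{-1/2}\|v\|_{L^2L^2}$ by Cauchy--Schwarz), introduces an auxiliary $w$ by subtracting the Duhamel integral of $(1-\g(Q))[P,\f(Q)]\chi e^{isP}u_0$ so that the remaining forcing is of the form $\g(Q)(\cdots)$, and then appeals to the Christ--Kiselev lemma for the retarded integral. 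You instead restart Duhamel from a pigeonhole-chosen $s_k\in I_k$ minimizing $\|v(s_k)\|_{L^2}$, which yields the same $h^{-1/2}\|v\|_{L^2L^2}$ without cutoffs, and you handle the inhomogeneous term by Minkowski in $r$ plus the \emph{homogeneous} clause of Corollary~\ref{WKBcor} applied to $e^{i(\cdot-r)P}F(r)$, bypassing both Christ--Kiselev and the need to put the forcing in the form $\tilde\f(Q)\tilde f$. This is more elementary, and in fact your route does not actually produce any $h^{N}\|u_0\|_{L^2}$ remainder: that term arises in the paper only from the auxiliary $w$ (since $(1-\g(Q))[P,\f(Q)]\in h^{\infty}\Psi_h^{0,-\infty}$), so your closing paragraph about absorbing $R_{N,1}$-errors is unnecessary though harmless. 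One small point: to get $\|v(s_k)\|_{L^2}\le(\a h)^{-1/2}\|v\|_{L^2(I_k,L^2)}$ you need $|I_k|\ge\a h$, so either take all $|I_k|=\a h$ exactly (allowing the last to overshoot $t=1$) or bound the last short interval by $\|v\|_{L^{\infty}L^2}$ as you already do for $k=0$.
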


\begin{proof}
$v$ satisfies
\[
i\pa_tv+Pv = [P,\f(Q)]\chi e^{itP}u_0 + \f(Q)[P,\chi]e^{itP} u_0, \quad v|_{t=0} = \f(Q)\chi u_0.
\]

We can decompose 
\[
[0,1] \subset \cup_{j=0}^{k} J_j
\]
where $J_j =[j\a h,(j+1)\a h]$ and $k\lesssim \frac{1}{h}$. By Corollary \ref{WKBcor}, 
\begin{align*}
\|v\|_{L^p(J_0,L^q)} =& \|\g(Q)v\|_{L^pL^q}\\
%\lesssim& \|\f(Q)\chi u_0\|_{L^2} + \|[P,\f(Q)]\chi e^{itP}u_0 + \f(Q)[P,\chi]e^{itP} u_0\|_{L^1(J_0,L^2)}\\
%&+\|(1-\g(Q))[P,\f(Q)]\chi e^{itP}u_0\|_{L^p(J_0,L^q(\re^n))}\\
\lesssim& \|v\|_{L^{\infty}(J_0,L^2(\re^n))}+ h^N\|u_0\|_{L^2}\\
&+ h^{\frac{1}{2}}\|[P,\f(Q)]\chi e^{itP}u_0 + \f(Q)[P,\chi]e^{itP} u_0\|_{L^2(J_0,L^2(\re^n))}\\
\|v\|_{L^p(J_k,L^q(\re^n))} \lesssim& \|v\|_{L^{\infty}(J_k,L^2(\re^n))}+ h^N\|u_0\|_{L^2}\\
&+ h^{\frac{1}{2}}\|[P,\f(Q)]\chi e^{itP}u_0 + \f(Q)[P,\chi]e^{itP} u_0\|_{L^2(J_k,L^2(\re^n))}.
\end{align*}
Take $\y \in C_c^{\infty}(\re)$ such that $\y = 1$ on $[-\frac{1}{2},\frac{1}{2}]$, $\y = 0$ outside $[-\frac{3}{4},\frac{3}{4}]$. Set $\y_j(t) = \y(\frac{t-(j+\frac{1}{2})}{\a h})$ for $j=1,... k-1$,
\[ 
w(t) = v(t) - i\int_0^te^{i(t-s)P}(1-\g(Q))[P,\f(Q)]\chi e^{isP}u_0ds,
\]
$v_j(t) = \y_j(t)v(t)$ and $w_j(t) = \y_j(t)w(t)$. Then, $v_j-w_j$ solves
\begin{align*}
&i\pa_t(v_j-w_j) + P(v_j-w_j)\\
&= i\y_j'(t)(v-w) + \y_j(t)\g(Q)([P,\f(Q)]\chi e^{itP}u_0 + \f(Q)[P,\chi]e^{itP} u_0),\\
&v_j(0)-w_j(0) = 0.
\end{align*}
Set $J_j' = J_j + [-\frac{\a h}{2},\frac{\a h}{2}]$, then we have
\begin{align*}
\|\g&(Q)(v-w)\|_{L^p(J_j,L^q(\re^n))}\\
\leq& \|\g(Q)(v_j-w_j)\|_{L^p(J_j',L^q(\re^n))}\\
\lesssim & \frac{1}{h}\|\g(Q) \int_0^te^{i(t-s)P}\y'(\frac{s-(j+\frac{1}{2})}{\a h})(v-w)ds\|_{L^p(J_j',L^q(\re^n))} \\
&+ \|[P,\f(Q)]\chi e^{itP}u_0 + \f(Q)[P,\chi]e^{itP} u_0\|_{L^1(J_j',L^2(\re^n))}\\
\leq& \frac{1}{h}\|v\|_{L^1(J_j',L^2(\re^n))} + h^N\|u_0\|_{L^2}\\
&+ \|[P,\f(Q)]\chi e^{itP}u_0 + \f(Q)[P,\chi]e^{itP} u_0\|_{L^1(J_j',L^2(\re^n))}\\
\leq& \frac{1}{h^{1/2}}\|v\|_{L^2(J_j',L^2(\re^n))} + h^N\|u_0\|_{L^2}\\
&+  h^{1/2} \|[P,\f(Q)]\chi e^{itP}u_0 + \f(Q)[P,\chi]e^{itP} u_0)\|_{L^2(J_j',L^2(\re^n)},
\end{align*}
where we use Christ-Kislev lemma as in the proof of Theorem \ref{full outside}.
By using

\hspace{-7mm} $\|w\|_{L^p(J_j',L^q(\re^n))}  \lesssim \|v\|_{L^p(J_j',L^q(\re^n))} +O(h^{\infty})\|u_0\|_{L^2}$, we have
\begin{align*}
\|v\|_{L^p(J_j,L^q(\re^n))} \lesssim& \frac{1}{h^{1/2}}\|v\|_{L^2(J_j',L^2(\re^n))} + h^N\|u_0\|_{L^2}\\
&+  h^{1/2} \|[P,\f(Q)]\chi e^{itP}u_0 + \f(Q)[P,\chi]e^{itP} u_0\|_{L^2(J_j',L^2(\re^n))}.
\end{align*}
Consequently, we get
\begin{align*}
\|v\|_{L^p(J_j,L^q)}^p \lesssim& \|v\|_{L^{\infty}(J_j',L^2)}^p+ \sum_{j=0}^k\frac{1}{h^{p/2}}\|v\|_{L^2(J_j',L^2(\re^n))}^p + h^{pN-1}\|u_0\|_{L^2}^p\\
&+  \sum_{j=0}^k h^{p/2} \|[P,\f(Q)]\chi e^{itP}u_0 + \f(Q)[P,\chi]e^{itP} u_0\|_{L^2(J_j',L^2)}^p\\
\lesssim& \|v\|_{L^{\infty}(J_j',(L^2)}^p+ \frac{1}{h^{p/2}}(\sum_{j=0}^k\|v\|_{L^2(J_j',L^2)}^2)^{p/2} + h^{pN-1}\|u_0\|_{L^2}^p\\
&+   h^{p/2} (\sum_{j=0}^k\|[P,\f(Q)]\chi e^{itP}u_0 + \f(Q)[P,\chi]e^{itP} u_0\|_{L^2(J_j',L^2)}^2)^{p/2}.
\end{align*}

\end{proof}

\begin{rem}
Since $[\f(Q),P] \neq 0$, we cannot use the method Proposition 5.4 in $\cite{BT1}$ directly. So we modified their argument by using the estimate $[\f(Q),P]=O(h)$ and the energy estimates.

\end{rem}

\begin{thm}
Under Assumption A,B and C, for any $\chi \in C_c^{\infty}(\re^n)$ there exists $C>0$ such that
\begin{align*}
\|\chi e^{itP}u_0\|_{L^p([0,1],L^q(\re^n))}\leq \|u_0\|_{L^2(\re^n)}
\end{align*}
for $u_0\in L^2(\re^n)$.
\end{thm}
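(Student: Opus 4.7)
The plan is to reduce to frequency-localized estimates via the Littlewood--Paley decomposition in Corollary \ref{PLsum} and sum the bounds provided by the preceding Proposition, exploiting the local smoothing effect of Lemma~3.5 together with the commutator estimates \eqref{eq:commPL}--\eqref{eq:commPR}.

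First I would apply Corollary \ref{PLsum} to $u(t) = \chi e^{itP}u_0$ with a dyadic Littlewood--Paley family $\{\varphi(Q(2^{-j}))\}_{j \geq 0}$. The $\|u\|_{L^\infty L^2}$ term is controlled by $\|u_0\|_{L^2}$ via the energy estimate of Lemma \ref{energy estimates}, so it suffices to show $\sum_{j\geq 0} \|\varphi(Q(2^{-j}))\chi e^{itP}u_0\|_{L^p L^q}^2 \lesssim \|u_0\|_{L^2}^2$.

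Second, writing $h = 2^{-j}$ and $v_h(t) = \varphi(Q(h))\chi e^{itP}u_0$, the preceding Proposition gives a bound of the form
\begin{align*}
\|v_h\|_{L^p L^q}^2 \lesssim\;& \|v_h\|_{L^\infty L^2}^2 + h^{-1}\|v_h\|_{L^2 L^2}^2 \\
& + h\,\|[P,\varphi(Q)]\chi e^{itP}u_0\|_{L^2 L^2}^2 + h\,\|\varphi(Q)[P,\chi]e^{itP}u_0\|_{L^2 L^2}^2 + h^{2N}\|u_0\|_{L^2}^2.
\end{align*}
Now I would sum over dyadic $h$. The $L^\infty L^2$ pieces sum by the almost-orthogonality of the spectral projectors $\varphi(Q(h))$ combined with the energy estimate. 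For the $h^{-1}\|v_h\|_{L^2 L^2}^2$ term I use that the symbol of $\varphi(Q(h))$ is supported where $|\xi| \sim 1/h$, so that $\|v_h\|_{L^2}^2 \sim h\,\|v_h\|_{H^{1/2}}^2$; the sum therefore becomes comparable to $\sum_h \|v_h\|_{L^2 H^{1/2}}^2 \lesssim \|\chi e^{itP}u_0\|_{L^2 H^{1/2}}^2$, which is controlled by $\|u_0\|_{L^2}^2$ via the local smoothing estimate (Lemma~3.5, which requires Assumption \ref{assB}). The term $h\,\|[P,\varphi(Q)]\chi e^{itP}u_0\|_{L^2 L^2}^2$ is directly bounded using \eqref{eq:commPL} and energy, giving a summable $\sum_h h^3 \|u_0\|_{L^2}^2$.

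The main obstacle is the cross-commutator term $h\,\|\varphi(Q)[P,\chi]e^{itP}u_0\|_{L^2 L^2}^2$, which reflects exactly the non-commutativity of $\varphi(Q)$ and $e^{itP}$ discussed in the introduction. Here $[P,\chi]$ is a first-order differential operator with compactly supported coefficients, so choosing $\tilde\chi \in C_c^{\infty}(\re^n)$ equal to $1$ on $\supp\chi$ and applying local smoothing to $\tilde\chi e^{itP}u_0$ gives $\|[P,\chi]e^{itP}u_0\|_{L^2 H^{-1/2}} \lesssim \|u_0\|_{L^2}$. Combined with the $h^{-1/2}$ frequency scaling of $\varphi(Q(h))$ acting on functions localized at frequency $1/h$, namely $\|\varphi(Q(h))f\|_{L^2}^2 \sim h^{-1}\|\varphi(Q(h))f\|_{H^{-1/2}}^2$, and the uniform boundedness of $\varphi(Q(h))$ on $H^{-1/2}$, the sum becomes comparable to $\sum_h \|\varphi(Q(h))[P,\chi]e^{itP}u_0\|_{H^{-1/2}}^2 \lesssim \|[P,\chi]e^{itP}u_0\|_{L^2 H^{-1/2}}^2 \lesssim \|u_0\|_{L^2}^2$. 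Finally, choosing $N$ large enough makes the $h^{2N}$ remainder trivially summable, and combining all the pieces gives the claim.
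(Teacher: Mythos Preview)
Your overall strategy---apply Corollary~\ref{PLsum}, feed each dyadic piece into the preceding Proposition, and close the sum using the commutator bounds \eqref{eq:commPL}--\eqref{eq:commPR} together with local smoothing---is exactly the paper's. The handling of the $h^{-1}\|v_h\|_{L^2L^2}^2$ and $h\|\f(Q)[P,\chi]e^{itP}u_0\|_{L^2L^2}^2$ terms via frequency scaling and almost-orthogonality in $H^{\pm 1/2}$ is a legitimate variant of what the paper does (the paper instead inserts a fattened cutoff $\g(Q)$ with $\g=1$ on $\supp\f$, commutes it through $\chi e^{itP}$ using \eqref{eq:commPR}, and applies local smoothing to $e^{itP}\g(Q)u_0$); both routes yield the same square-summable bound.

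There is, however, a genuine gap in your treatment of the $L^{\infty}_tL^2$ term. Almost-orthogonality of $\f(Q(h))$ gives $\sum_h\|\f(Q(h))g\|_{L^2}^2\lesssim\|g\|_{L^2}^2$ for each fixed $g$, hence $\sup_t\sum_h\|v_h(t)\|_{L^2}^2\lesssim\|u_0\|_{L^2}^2$; but what you need is $\sum_h\sup_t\|v_h(t)\|_{L^2}^2$, and the supremum and the sum do not commute. The energy estimate alone does not repair this. The paper's remedy is precisely to commute $\f(Q)$ to the initial data: writing $v_h(t)=\chi e^{itP}\f(Q)u_0+[\f(Q),\chi e^{itP}]u_0$ and invoking \eqref{eq:commPR} gives $\|v_h\|_{L^{\infty}L^2}\lesssim\|\f(Q)u_0\|_{L^2}+h\|u_0\|_{L^2}$, and now both pieces square-sum over dyadic $h$. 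With this one correction your argument goes through and is essentially the paper's proof.
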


\begin{proof}
\begin{comment}
We calculate $[P,\f(Q)\chi] = \g(Q)[P,\f(Q)]\chi + (1-\g(Q))[P,\f(Q)]\chi + \f(Q)[P,\chi]$ and notice that $(1-\g(Q))[P,\f(Q)]\chi \in h^{N}\Psi^{-N}$ for any $N>0$. Then, we compute
\begin{align*}
&\|\int_0^t e^{i(t-s)P} (1-\g(Q))[P,\f(Q)]\chi e^{isP}u_0ds\|_{L^p([0,1],L^q(\re^n))}\\
&\leq C_T\|\int_0^t \|(1-\g(Q))[P,\f(Q)]\chi e^{isP}u_0\|_{H^{\frac{n}{2}+1}(\re^n)}ds\|_{L^p([0,1])}\\
&\leq C_Th^{N}\|u_0\|_{L^2(\re^n)},
\end{align*}
by the energy estimate.
\end{comment}
 Due to the last proposition,
\begin{align*}
\|v\|_{L^p([0,1], L^p(\re^n))} \lesssim& \|v\|_{L^{\infty}([0,1], L^2(\re^n)))} + h^{-\frac{1}{2}}\|v\|_{L^2([0,1], L^2(\re^n)))} \\
&+ h^{\frac{1}{2}}\|[P,\f(Q)]\chi e^{itP}u_0 + \f(Q)[P,\chi ]e^{itP}u_0\|_{L^2([0,1], L^2(\re^n)))}\\
&+ h^{N-\frac{1}{2}-1}\|u_0\|_{L^2(\re^n)}\\
\end{align*}
holds.
\begin{align*}
\|v\|_{L^{\infty}([0,1], L^2(\re^n)))}  \lesssim& \|\chi e^{itP} \f(Q)u_0\|_{L^{\infty}([0,1],L^2(\re^n))}\\
&+ \|[\f(Q), \chi e^{itP}]u_0\|_{L^{\infty}([0,1],L^2(\re^n))}\\
\lesssim& \|\f(Q)u_0\|_{L^2(\re^n)} + h\|u_0\|_{L^2(\re^n)}.
\end{align*}

\begin{align*}
h^{-\frac{1}{2}}\|v\|_{L^2([0,1], L^2(\re^n)))} \lesssim& h^{-\frac{1}{2}}\|\f(Q)[\g(Q) , \chi e^{itP}]u_0\|_{L^2([0,1], L^2)}\\
&+ h^{-\frac{1}{2}}\|\f(Q)\chi e^{itP}\g(Q)u_0\|_{L^2([0,1], L^2)}\\
%\lesssim& h^{\frac{1}{2}}\|u_0\|_{L^2} + \|\chi e^{itP}\g(Q)u_0\|_{L^2([0,1],H^{\frac{1}{2}})}\\
\lesssim& h^{\frac{1}{2}}\|u_0\|_{L^2} + \|\g(Q)u_0\|_{L^2}
\end{align*}
by the local smoothing estimates. At last, by using the local smoothing estimates again, we get

\begin{align*}
h^{\frac{1}{2}}\|\f(Q)[P,\chi ]e^{itP}u_0\|_{L^2([0,1], L^2(\re^n)))} \lesssim& h^{\frac{1}{2}}\|\f(Q)[\g(Q), [\chi, P]e^{itP}]u_0\|_{L^2([0,1], L^2)}\\
&+ h^{\frac{1}{2}}\|\f(Q)[\chi, P]e^{itP}\g(Q)u_0\|_{L^2([0,1], L^2)}\\
\lesssim& h^{\frac{1}{2}}\|u_0\|_{L^2} + \|\g(Q)u_0\|_{L^2}.
\end{align*}

Consequently, we have
\begin{align*}
\|v\|_{L^p([0,1], L^q)} \lesssim \|\g(Q)u_0\|_{L^2} + h^{\frac{1}{2}}\|u_0\|_{L^2} 
\end{align*}
and by Corollary \ref{PLsum},  we conclude
\[
\|\chi e^{itP}u_0\|_{L^p([0,1],L^q)} \lesssim \|u_0\|_{L^2}
\]
\end{proof}

%\begin{thebibliography}{99}
%\end{thebibliography}

\bibliographystyle{jplain}
\bibliography{reference}

\end{document}